\newtheorem{theorem}{Theorem}
\newtheorem{example}[theorem]{Example}
\newtheorem{lemma}[theorem]{Lemma}
\newtheorem{remark}[theorem]{Remark}
\newenvironment{proof}[1][Proof]{\textbf{#1.} }{\ \rule{0.5em}{0.5em}}
\journal{Journal of Differential Equations}
\begin{document}
\begin{frontmatter}
\title{Beyond the Melnikov method II: multidimensional setting }

\author{Maciej J. Capi\'nski\footnote{Partially supported by NCN grants 2015/19/B/ST1/01454, 2016/21/B/ST1/02453 and by the Faculty of Applied Mathematics AGH UST statutory tasks within subsidy of Ministry of Science and Higher Education.}}
\ead{mcapinsk@agh.edu.pl}
\address{AGH University of Science and Technology, al. Mickiewicza 30, 30-059 Krak\'ow, Poland}

\author{Piotr Zgliczy\'nski\footnote{Partially supported by the NCN grant 2015/19/B/ST1/01454}}
\ead{umzglicz@cyf-kr.edu.pl}
\address{Jagiellonian University, ul. prof. Stanis\l awa \L ojasiewicza 6, 
30-348 Krak\'ow, Poland}

\begin{abstract}
  We present a Melnikov type approach for establishing transversal intersections of stable/unstable manifolds of perturbed normally hyperbolic invariant manifolds. 
 We do not need to know the explicit formulas for the homoclinic orbits prior to the perturbation. We also do not need to compute any integrals along such homoclinics. All needed bounds are established using rigorous computer assisted numerics. Lastly, and most importantly, the method establishes intersections for an explicit range of parameters, and not only for perturbations that are `small enough', as is the case in the classical Melnikov approach.
\end{abstract}

\begin{keyword} Melnikov method, normally hyperbolic invariant manifolds, whiskered tori, transversal homoclinic intersection, computer assisted proof
\MSC[2010] 37D10, 58F15, 65G20
\end{keyword}
\end{frontmatter}

%\date{\today}
%\begin{document}
%\maketitle

%\noindent
%\textbf{Keywords:} Melnikov method, normally hyperbolic invariant manifolds, whiskered tori, transversal homoclinic intersection, computer assisted proof\\
%\textbf{AMS classification numbers:} 37D10, 58F15 ,65G20

%\tableofcontents
\section{Introduction}
This paper is a sequel to \cite{Meln}, which developed a tool for establishing the splitting of separatrices for an explicit range of perturbation parameters. The paper \cite{Meln} treated the case of one-dimensional separatrices. In the current work we generalise the results to multidimensional setting. We treat the case of a normally hyperbolic invariant manifold (NHIM), with multi dimensional stable and unstable manifolds. The manifolds do not need to be of the same dimension. Also, the system does not need to be Hamiltonian. The stable and unstable manifolds of the NHIM can coincide prior to the perturbation (or one can be contained in the other in the case of unequal dimensions), and our method ensures that for a given range of parameters, the manifolds will intersect transversally after the perturbation of the system. We also formulate our results so that we can establish transversal splitting after the perturbation in the setting when prior to the perturbation manifolds coincide on some of the coordinates and on others they intersect transversally.

There are two main differences between our result and the more standard Melnikov type methods \cite{Chow,delshamsMelPotential,delshamsLlave,delshamsGuttSpilt2000,HM,M,Tre94,Wi}. The first is that these are based on investigating integrals along homoclinic orbits to NHIMs, and to do so one usually requires to know the formulae for them. In our approach we do not require to know such formulae since we do not need to compute integrals. Our result follows from bounds on the stable and unstable manifolds and on their dependence under the perturbation. There are a number of methods which allow for establishing such bounds using computer assisted tools \cite{BJLM,Jay,Geom,FH,LJR,JM}. Instead of computing integrals we use such estimates, combined with the Brouwer degree, to establish explicit conditions under which the manifolds intersect transversally. The second difference is that the classical Melnikov type methods establish the intersection of manifolds for sufficiently small perturbations, but our method establishes such intersections for an explicitly given range of parameters.

The paper is organised as follows. Section \ref{sec:preliminaries} contains preliminaries. In section \ref{sec:zeros} we state our main results in abstract setting. We discuss there how to establish zeros of functions under perturbation. We formulate results which imply non degenerate zeros, and give their proofs using the Brouwer degree. In section \ref{sec:Melnikov} we present several scenarios of intersections of separatrices of NHIMs, and prove their existence using the tools established in section \ref{sec:zeros}. In section \ref{sec:verif} we discuss how to verify the needed assumptions. Finally, section \ref{sec:example} contains an example of application. There we treat a system which has two dimensional stable and unstable manifolds of a hyperbolic fixed point, which coincide prior to the perturbation, but  intersect transversally after.

%TCIDATA{OutputFilter=latex2.dll}
%TCIDATA{Version=5.00.0.2606}
%TCIDATA{LaTeXparent=0,0,online-edit.tex}

\section{Preliminaries}\label{sec:preliminaries}

\subsection{Notations}

We use the notation $B_{k}(x,r)$ for a ball of radius $r$, centred at $x$ in
$\mathbb{R}^k$. If not stated otherwise, the ball is under the Euclidean norm.
We also use a simplified notation $B_{k}:=B_{k}(0,1)$. For a set $U\subset \mathbb{R}^k$ we will write $\mathrm{int}U$ for its interior and $\overline{U}$ for its closure.

For a matrix $A \in \mathbb{R}^{n \times n}$ and norm $\|\cdot\|$ on $\mathbb{R}^n$ we define%
\[
m\left(  A\right)  =\left\{
\begin{array}
[c]{lll}%
\frac{1}{\left\Vert A^{-1}\right\Vert } &  & \text{if det}A\neq 0\\
0 &  & \text{otherwise.}%
\end{array}
\right.
\]
We remark that for any vector $x$ holds%
\[
\left\Vert Ax\right\Vert \geq m\left(  A\right)  \left\Vert x\right\Vert .
\]

If $W$ is a manifold and $p \in W$ then by $T_p W$ we will denote the tangent space to $W$ at the point $p$.

\subsection{Properties of the local Brouwer degree}

Let $f:\mathbb{R}^{n}\rightarrow\mathbb{R}^{n}$ be a continuous function and
let $U\subset\mathbb{R}^{n}$ be an open set. For $c\in\mathbb{R}^{n}$ we use
the notation $\deg(f,U,c)$ to stand for the local Brouwer degree of $f$ in $U$
at $c$. The local Brouwer degree has the following properties (see \cite[Ch. III]{Sch} for proofs)

\textbf{Solution property:} If $\deg\left(  f,U,c\right)  \neq0$, then there
exists an $x\in U$ for which
$f(x)=c.$

\textbf{Homotopy property:} Assume that $H:\left[  0,1\right]  \times
U\rightarrow\mathbb{R}^{n}$ is continuous and%
\begin{equation}
\bigcup_{\lambda\in\left[  0,1\right]  }H_{\lambda}^{-1}\left(  c\right)  \cap
U\quad\text{is compact} \label{eq:hom-prop-cond}%
\end{equation}
then for all $\lambda\in\left[  0,1\right]  $%
\[
\deg\left(  H_{\lambda},U,c\right)  =\deg\left(  H_{0},U,c\right),
\]
where $H_\lambda(x)=H(\lambda,x)$.

\begin{remark}
\label{rem:homotopy-prop}Condition $c\notin H\left(  \left[  0,1\right]
,\partial U\right)  $ implies (\ref{eq:hom-prop-cond}).
\end{remark}

\textbf{Degree property for affine maps:} If $f(x)=B(x-x_{0})+c$, where $B$ is
an invertible matrix, $x_{0},c\in\mathbb{R}^{n}$ and $x_{0}\in U$, then
\[
\deg(f,U,c)=\mathrm{sgn}\det B.
\]

\subsection{Interval Newton method}

We start by writing out the interval arithmetic notations conventions that
will be used in the paper. Let $U$ be a subset of $\mathbb{R}^{k}$. We shall
denote by $[U]$ an interval enclosure of the set $U$, that is, a set
\[
\lbrack U]=\Pi_{i=1}^{k}[a_{i},b_{i}]\subset\mathbb{R}^{k},
\]
such that $U\subset\lbrack U]$. Similarly, for a family of matrixes
$\mathbf{A}\subset\mathbb{R}^{k\times m}$ we denote its interval enclosure as
$\left[  \mathbf{A}\right]  $, that is, a set
\[
\left[  \mathbf{A}\right]  =\left(  [a_{ij},b_{ij}]\right)
_{\substack{i=1,...,k\\j=1,...,m}}\subset\mathbb{R}^{k\times m},
\]
such that $\mathbf{A}\subset\left[  \mathbf{A}\right]  $. For $f:\mathbb{R}%
^{k}\rightarrow\mathbb{R}^{m}$, by $[Df(U)]$ we shall denote an interval
enclosure
\[
\lbrack Df(U)]=\left[  \left\{  A\in\mathbb{R}^{k\times m}|A_{ij}\in\left[
\inf_{x\in U}\frac{\partial f_{i}}{\partial x_{j}}(x),\sup_{x\in U}%
\frac{\partial f_{i}}{\partial x_{j}}(x)\right]  \right\}  \right]  .
\]
For a set $U$ and a family of matrixes $\mathbf{A}$ we shall use the notation
$\left[  \mathbf{A}\right]  \left[  U\right]  $ to denote an interval
enclosure
\[
\left[  \mathbf{A}\right]  \left[  U\right]  =\left[  \left\{  Au:A\in\left[
\mathbf{A}\right]  ,u\in\left[  U\right]  \right\}  \right]  .
\]
We shall say that a family of matrixes $\mathbf{A}\subset\mathbb{R}^{k\times
k}$ is invertible, if each matrix $A\in\mathbf{A}$ is invertible. We shall
also use the notation%
\[
\left[  \mathbf{A}\right]  ^{-1}\left[  U\right]  =\left[  \left\{
A^{-1}u:A\in\left[  \mathbf{A}\right]  ,u\in\left[  U\right]  \right\}
\right]  .
\]

Let now us consider now a $C^r$ function
\[
f:\mathbb{R}^{k}\times\mathbb{R}^{m}\rightarrow\mathbb{R}^{m}.
\]
Below we present an interval Newton type method \cite{A,Mo,N} for establishing estimates on
the set $\left\{  f=0\right\}  $. 

Consider $x\in\mathbb{R}^{k}$ and define a function $f_{x}:\mathbb{R}%
^{m}\rightarrow\mathbb{R}^{m}$ as
\[
f_{x}\left(  y\right)  :=f\left(  x,y\right)  .
\]
For $X\subset\mathbb{R}^{k}$ and $Y\subset\mathbb{R}^{m}$, by $Df_{X}\left(
Y\right)  $ we denote the family of matrixes
\[
Df_{X}\left(  Y\right)  =\left\{  D\left(  f_{x}\right)  \left(  y\right)
:x\in X,y\in Y\right\}  .
\]
Bounds on $\left\{  f=0\right\}  $ can be obtained by using the interval
Newton method. Below theorem is a well known modification (see for instance
\cite[p. 376]{Rump}) of the method, that includes a parameter.

\begin{theorem}
\label{th:interval-Newton}Let $X=\Pi_{i=1}^{k}\left[  a_{i},b_{i}\right]
\subset\mathbb{R}^{k}$ and $Y=\Pi_{i=1}^{m}\left[  c_{i},d_{i}\right]
\subset\mathbb{R}^{m}$. Consider $y_{0}\in\mathrm{int}Y$ and%
\[
N\left(  y_{0},X,Y\right)  =y_{0}-\left[  Df_{X}\left(  Y\right)  \right]
^{-1}\left[  f_{X}\left(  y_{0}\right)  \right]  .
\]
If
\[
N\left(  y_{0},X,Y\right)  \subset\mathrm{int}Y,
\]
then there exists a $C^r$ function $q:X\rightarrow Y$ such that $f\left(  x,q\left(
x\right)  \right)  =0.$

\end{theorem}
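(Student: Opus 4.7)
The plan is to reduce the parametric statement to the classical (non-parametric) interval Newton theorem applied pointwise in $x$, then upgrade the resulting family of zeros to a $C^r$ function via the implicit function theorem.

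First, I would fix an arbitrary $x \in X$ and look at the slice function $f_x : Y \to \mathbb{R}^m$. Since $Df_x(Y) \subset Df_X(Y)$ and $f_x(y_0) \in f_X(y_0)$, the interval Newton operator for $f_x$ alone,
\[
N_x(y_0, Y) := y_0 - [Df_x(Y)]^{-1}[f_x(y_0)],
\]
is contained in $N(y_0, X, Y)$, and hence in $\mathrm{int}\,Y$. The classical interval Newton theorem (existence part) then gives a point $q(x) \in Y$ with $f(x, q(x)) = 0$. For uniqueness within $Y$: if $f(x, y_1) = f(x, y_2) = 0$ with $y_1, y_2 \in Y$, the mean value theorem produces a matrix $A \in [Df_X(Y)]$ with $A(y_1 - y_2) = 0$; since writing $[Df_X(Y)]^{-1}$ presumes that every matrix in this interval enclosure is invertible, we conclude $y_1 = y_2$. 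This makes $q : X \to Y$ a well-defined function.

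Second, for the regularity. The invertibility of $\partial_y f(x, q(x))$ is guaranteed for every $x \in X$ by the same hypothesis that $[Df_X(Y)]$ is invertible. Hence the implicit function theorem, applied locally near each $(x, q(x))$, produces a $C^r$ map $\tilde q$ defined on a neighbourhood of $x$ with $f(\tilde x, \tilde q(\tilde x)) = 0$ and values in $Y$ (after possibly shrinking the neighbourhood). By the uniqueness established in the previous step, $\tilde q \equiv q$ on that neighbourhood, so $q$ is $C^r$ near $x$. Since $x$ was arbitrary, $q \in C^r(X, Y)$.

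The only delicate point is really the uniqueness argument, since without it we could not conclude that the locally $C^r$ implicit function constructed at each base point patches together into a single globally defined function on $X$. Everything else is a bookkeeping step showing that the parametric enclosure $[Df_X(Y)]$ dominates each sliced enclosure $[Df_x(Y)]$, so that the hypothesis of the theorem propagates to each $f_x$.
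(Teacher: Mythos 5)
The paper does not actually prove this theorem --- it cites it as ``a well known modification (see for instance [Rump, p.\ 376]) of the method, that includes a parameter,'' so there is no in-paper proof to compare against. Your reduction is a correct and natural way to derive the parametric version from the scalar-parameter-free statement: fix $x$, apply the classical interval Newton theorem to $f_x$ using the coarser enclosure $[Df_X(Y)] \supset Df_x(Y)$ and $[f_X(y_0)] \ni f_x(y_0)$ to get existence and uniqueness of $q(x)$ in $Y$, then invoke the implicit function theorem at each $(x,q(x))$ (legitimate since invertibility of $[Df_X(Y)]$ forces $\partial_y f(x,q(x))$ invertible) and use uniqueness to glue the local $C^r$ branches into a single global $C^r$ function. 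The only loose wording is ``the mean value theorem produces a matrix $A \in [Df_X(Y)]$'': the vector-valued MVT does not give a single matrix directly; you should either use the integral form $\int_0^1 \partial_y f(x, y_2 + t(y_1-y_2))\,dt$, which lies in $[Df_X(Y)]$ by convexity of the interval box, or apply the scalar MVT componentwise to form a matrix whose $i$-th row is $\nabla_y f_i(x,\xi_i)$. Either fix makes the uniqueness step rigorous; the rest of the argument stands. You could also have shortcut the uniqueness derivation by taking it directly from the conclusion of the classical interval Newton theorem, but re-deriving it does no harm and makes the gluing step self-contained.
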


%TCIDATA{OutputFilter=latex2.dll}
%TCIDATA{Version=5.00.0.2606}
%TCIDATA{LaTeXparent=0,0,online-edit.tex}

\section{Existence of zeros of functions under perturbation\label{sec:zeros}}

Let $x\in\mathbb{R}^{k}$, $y:\mathbb{R}\times\mathbb{R}^{k}\rightarrow
\mathbb{R}^{k}$ and let $U\subset\mathbb{R}^{k}$ be an open set. In this
section we will formulate conditions under which for any $\varepsilon
\in(0,\epsilon]$ there exists an $x=x\left(  \varepsilon\right)  \in U$ such
that $y\left(  \varepsilon,x\right)  =0$. We shall also investigate conditions
that will ensure that the zero is non degenerate for a given fixed
$\varepsilon\in(0,\epsilon].$

Our conditions will be based on the following lemma, which is a direct
consequence of the solution property of the Brouwer degree.

\begin{lemma}
\label{lem:zero-from-degree}If for any $\varepsilon\in(0,\epsilon]$%
\[
\deg\left(  y\left(  \varepsilon,\cdot\right)  ,U,0\right)  \neq0,
\]
then for any $\varepsilon\in(0,\epsilon]$ there exists an $x=x\left(
\varepsilon\right)  \in U$ such that $y\left(  \varepsilon,x\right)  =0$.
\end{lemma}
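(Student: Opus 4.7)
The plan is to observe that this lemma is essentially a packaging of the solution property of the local Brouwer degree, applied pointwise in $\varepsilon$, so no new idea beyond what is already stated in Section 2 is needed.

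First I would fix an arbitrary $\varepsilon \in (0,\epsilon]$ and consider the continuous map $y(\varepsilon,\cdot) : U \to \mathbb{R}^k$. By the hypothesis of the lemma, the local Brouwer degree $\deg(y(\varepsilon,\cdot), U, 0)$ is nonzero. Next I would invoke the solution property recalled in Section~\ref{sec:preliminaries}, which asserts that whenever $\deg(f,U,c) \neq 0$ there exists a point $x \in U$ with $f(x)=c$. Applied to $f = y(\varepsilon,\cdot)$ and $c=0$, this immediately yields some $x = x(\varepsilon) \in U$ satisfying $y(\varepsilon,x) = 0$. Since $\varepsilon$ was arbitrary in $(0,\epsilon]$, the conclusion follows.

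There is no real obstacle; the only thing to be slightly careful about is that the solution property is formulated for a single continuous map into $\mathbb{R}^k$, whereas here we have a family parametrised by $\varepsilon$. The resolution is that continuity in $x$ alone (which is what the hypothesis implicitly assumes, as the degree would otherwise be undefined) is sufficient to apply the solution property separately for each $\varepsilon$; no joint continuity or homotopy argument is required at this stage. In particular, the function $\varepsilon \mapsto x(\varepsilon)$ produced this way need not be continuous, and the lemma makes no such claim. Stronger statements of this flavour, together with non-degeneracy of the zero, will be the content of the subsequent results in Section~\ref{sec:zeros}.
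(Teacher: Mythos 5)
Your proof is correct and coincides with the paper's own (implicit) argument: the paper explicitly notes that the lemma ``is a direct consequence of the solution property of the Brouwer degree,'' which is precisely the route you take, applying the solution property pointwise for each fixed $\varepsilon$. Your additional remark that continuity of $\varepsilon\mapsto x(\varepsilon)$ is neither needed nor claimed is accurate.
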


The result can readily be applied if $x\rightarrow y\left(  0,x\right)  $ has
non-degenerate zero, but this is not what will be our objective here. We will
want to formulate results that guarantee the existence of zeros of $y$ under
perturbation in the case when zeros of $y(0,x)$ are degenerate prior to the perturbation.

Making above more precise, we will assume that $k=k_{1}+k_{2}$, that
\begin{equation}
y=\left(  y_{1},y_{2}\right)  :\mathbb{R}\times\mathbb{R}^{k_{1}}%
\times\mathbb{R}^{k_{2}}\rightarrow\mathbb{R}^{k_{1}}\times\mathbb{R}^{k_{2}},
\label{eq:y1-y2-setting}%
\end{equation}
and our discussion will be under the assumption that for an open set
$U\subset\mathbb{R}^{k_{1}}\times\mathbb{R}^{k_{2}}$, for any $x\in U$, we
have%
\begin{equation}
y_{2}\left(  0,x\right)  =0. \label{eq:y2-zero}%
\end{equation}

On $\mathbb{R}^{k}_{1}\times\mathbb{R}^{k_{2}}$ we will use a norm given by
$\|(x_{1},x_{2})\|=\max(\|x_{1}\|,\|x_{2}\|)$ for some norms on $\mathbb{R}%
^{k_{1}}$ and $\mathbb{R}^{k_{2}}$. We assume that $y$ is $C^{2}$.

Above we assume that on the $y_{2}$ coordinate the function $y|_{U}$ is  zero.
On the $y_{1}$ coordinate, as of yet, we have not made any assumptions. In
practice, to obtain the existence of $x\left(  \varepsilon\right)  $ for which
$y\left(  \varepsilon,x(\varepsilon)\right)  =0$, on the $y_{1}$ coordinate we
will need to have a non-degenerate zero before the perturbation. The
perturbation should be small enough so that this zero will survive.

Our objective will be to formulate lemmas that will imply that assumptions of
Lemma \ref{lem:zero-from-degree} are fulfilled, in the presence of
(\ref{eq:y2-zero}). We start with the following:

\begin{lemma}
Let $E=\left[  0,\epsilon\right]  $. Assume that (\ref{eq:y2-zero}) is
satisfied and for any $x\in\partial U$ holds
\begin{equation}
0\notin\left[  \left(  y_{1},\frac{\partial y_{2}}{\partial\varepsilon
}\right)  \left(  E,x\right)  \right]  \label{eq:zero-not-on-boundary}%
\end{equation}
and $\deg\left(  \left(  y_{1},\frac{\partial y_{2}}{\partial\varepsilon
}\right)  \left(  0,\cdot\right)  ,U,0\right)  \neq0$.

Then for any $\varepsilon\in(0,\epsilon]$
\[
\deg\left(  y\left(  \varepsilon,\cdot\right)  ,U,0\right)  \neq0.
\]

\end{lemma}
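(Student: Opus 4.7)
My plan is to use the homotopy property of the local Brouwer degree to connect, for each fixed $\varepsilon\in(0,\epsilon]$, the map $y(\varepsilon,\cdot)$ to the map $(y_{1},\partial y_{2}/\partial\varepsilon)(0,\cdot)$, whose degree at $0$ is nonzero by assumption. The key structural fact I would exploit is that $y_{2}$ vanishes identically at $\varepsilon=0$ on $U$, so Taylor expansion in $\varepsilon$ lets me factor out one power of $\varepsilon$ and replace $y_{2}(\varepsilon,x)$ by an average of the derivative $\partial y_{2}/\partial\varepsilon$.

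First I would use (\ref{eq:y2-zero}) and the $C^{2}$-regularity of $y$ to write
\[
y_{2}(\varepsilon,x)=\varepsilon\,g(\varepsilon,x),\qquad g(\varepsilon,x):=\int_{0}^{1}\frac{\partial y_{2}}{\partial\varepsilon}(t\varepsilon,x)\,dt,
\]
so $g$ is continuous and $g(0,x)=\partial y_{2}/\partial\varepsilon(0,x)$. Fix now $\varepsilon\in(0,\epsilon]$ and define the homotopy $H:[0,1]\times\overline{U}\to\mathbb{R}^{k_{1}}\times\mathbb{R}^{k_{2}}$ by
\[
H(\lambda,x)=\Bigl(y_{1}(\lambda\varepsilon,x),\;\bigl(1-\lambda(1-\varepsilon)\bigr)\,g(\lambda\varepsilon,x)\Bigr).
\]
At $\lambda=0$ this reduces to $(y_{1}(0,x),\partial y_{2}/\partial\varepsilon(0,x))$, while at $\lambda=1$ it equals $(y_{1}(\varepsilon,x),\varepsilon g(\varepsilon,x))=y(\varepsilon,x)$, so $H$ interpolates between the two maps whose degrees I want to compare.

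The main obstacle is verifying that $0\notin H([0,1],\partial U)$, which via Remark \ref{rem:homotopy-prop} justifies applying the homotopy property. The scalar factor $1-\lambda(1-\varepsilon)$ is strictly positive for $\lambda\in[0,1]$ because $\varepsilon>0$, so $H(\lambda,x)=0$ would force both $y_{1}(\lambda\varepsilon,x)=0$ and $g(\lambda\varepsilon,x)=0$. Clearly $y_{1}(\lambda\varepsilon,x)\in[y_{1}(E,x)]$, and $g(\lambda\varepsilon,x)$ is a convex combination of values $\partial y_{2}/\partial\varepsilon(s,x)$ with $s\in[0,\lambda\varepsilon]\subseteq E$. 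Since the interval enclosure $[\partial y_{2}/\partial\varepsilon(E,x)]$ is a product of closed intervals and is therefore convex, this convex combination lies in it. Consequently
\[
\bigl(y_{1}(\lambda\varepsilon,x),g(\lambda\varepsilon,x)\bigr)\in\bigl[(y_{1},\partial y_{2}/\partial\varepsilon)(E,x)\bigr],
\]
which contradicts (\ref{eq:zero-not-on-boundary}) for $x\in\partial U$. Applying the homotopy and hypothesis on the degree at $\lambda=0$ yields
\[
\deg\bigl(y(\varepsilon,\cdot),U,0\bigr)=\deg(H_{1},U,0)=\deg(H_{0},U,0)=\deg\!\left(\bigl(y_{1},\tfrac{\partial y_{2}}{\partial\varepsilon}\bigr)(0,\cdot),U,0\right)\neq 0,
\]
which completes the argument.
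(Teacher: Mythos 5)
Your proposal is correct and follows essentially the same route as the paper's own proof: the same factoring $y_{2}(\varepsilon,x)=\varepsilon\int_{0}^{1}\partial_{\varepsilon}y_{2}(t\varepsilon,x)\,dt$, the same homotopy $H(\lambda,x)=\bigl(y_{1}(\lambda\varepsilon,x),(1-\lambda(1-\varepsilon))\,g(\lambda\varepsilon,x)\bigr)$ (the paper writes the scalar as $1+\lambda(\varepsilon-1)$ and the integral with the $\lambda$ inside, which is identical), and the same appeal to the homotopy property via the boundary condition (\ref{eq:zero-not-on-boundary}). If anything, you make the boundary step slightly more explicit than the paper by noting that the scalar factor stays strictly positive for $\varepsilon>0$ (so vanishing of $H(\lambda,x)$ forces both $y_{1}(\lambda\varepsilon,x)=0$ and $g(\lambda\varepsilon,x)=0$) and that the integral average lands in the convex interval box $\bigl[\partial_{\varepsilon}y_{2}(E,x)\bigr]$.
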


\begin{proof}
The proof will be based on the homotopy property of the local Brouwer degree.
First we observe that from (\ref{eq:y2-zero}) we obtain%
\[
y_{2}\left(  \varepsilon,x\right)  =y_{2}\left(  \varepsilon,x\right)
-y_{2}\left(  0,x\right)  =\int_{0}^{1}\frac{d}{du}y_{2}\left(  u\varepsilon
,x\right)  dy=\varepsilon\int_{0}^{1}\frac{\partial y_{2}}{\partial
\varepsilon}\left(  u\varepsilon,x\right)  dy.
\]
For fixed $\varepsilon\in(0,\epsilon]$ we consider the following homotopy%
\[
H\left(  \lambda,x\right)  =\left(  y_{1}\left(  \lambda\varepsilon,x\right)
,\left(  1+\lambda\left(  \varepsilon-1\right)  \right)  \int_{0}^{1}%
\frac{\partial y_{2}}{\partial\varepsilon}\left(  \lambda u\varepsilon
,x\right)  dy\right)  .
\]
We have chosen such homotopy since $H\left(  1,x\right)  =y\left(
\varepsilon,x\right)  $ and $H(0,x)=\left(  y_{1},\frac{\partial y_{2}%
}{\partial\varepsilon}\right)  \left(  0,x\right)  .$

Since $\int_{0}^{1}\frac{\partial y_{2}}{\partial\varepsilon}\left(
u\varepsilon,x\right)  dy\in\left[  \frac{\partial y_{2}}{\partial\varepsilon
}\left(  E,x\right)  \right]  ,$ by (\ref{eq:zero-not-on-boundary}) we see
that for $x\in\partial U$%
\[
H\left(  \lambda,x\right)  \neq0.
\]
By the homotopy property and Remark \ref{rem:homotopy-prop} we obtain%
\begin{align*}
\deg\left(  y\left(  \varepsilon,\cdot\right)  ,U,0\right)   &  =\deg\left(
H\left(  1,\cdot\right)  ,U,0\right) \\
&  =\deg\left(  H\left(  0,\cdot\right)  ,U,0\right)  =\deg\left(  \left(
y_{1},\frac{\partial y_{2}}{\partial\varepsilon}\right)  \left(
0,\cdot\right)  ,U,0\right)  \neq0,
\end{align*}
as required.
\end{proof}

We now formulate a more explicit result that can be verified in practice:

\begin{lemma}
\label{lem:y-zero-practical} Let $y$ be as above.

Let $p=(p_{1},p_{2}) \in\mathbb{R}^{k_{1}} \times\mathbb{R}^{k_{2}}$ be such
that $y(0,p)=0$ and $R>0$.

Assume that $U=B\left( p,R\right)  $ and let $E=\left[  0,\epsilon_{0}\right]
$. Let%
\[
A=\left(
\begin{array}
[c]{cc}%
A_{11} & A_{12}\\
A_{21} & A_{22}%
\end{array}
\right)  :=\left(
\begin{array}
[c]{cc}%
\frac{\partial y_{1}}{\partial x_{1}}\left(  0,p\right)  & \frac{\partial
y_{1}}{\partial x_{2}}\left(  0,p\right) \\
\frac{\partial^{2}y_{2}}{\partial\varepsilon\partial x_{1}}\left(  0,p\right)
& \frac{\partial^{2}y_{2}}{\partial\varepsilon\partial x_{2}}\left(
0,p\right)
\end{array}
\right)
\]
and%
\begin{align*}
\Delta_{1}  &  =\left[  \frac{\partial y_{1}}{\partial x}\left(  E,U\right)
-(A_{11},0)\right] ,\\
\Delta_{2}  &  =\left[ \frac{\partial^{2}y_{2}}{\partial\varepsilon\partial
x}\left(  E,U\right)  -(0,A_{22})\right] .
\end{align*}
If%
\begin{align}
m\left(  A_{11}\right)  R  &  >\epsilon\left\Vert \frac{\partial y_{1}%
}{\partial\varepsilon}\left(  E,p\right)  \right\Vert +\left\Vert \Delta
_{1}\right\Vert R,\label{eq:estmA11}\\
m\left(  A_{22}\right)  R  &  >\left\Vert \frac{\partial y_{2}}{\partial
\varepsilon}\left(  E,p\right)  \right\Vert +\left\Vert \Delta_{2}\right\Vert
R,\label{eq:estmA22}%
\end{align}
then
\[
\deg\left( y\left(  \varepsilon,\cdot\right)  ,U,0\right)  \neq0.
\]

\end{lemma}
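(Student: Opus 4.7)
The plan is to verify the two hypotheses of the preceding lemma and then invoke it: the boundary non-vanishing condition (\ref{eq:zero-not-on-boundary}) at every $\varepsilon' \in E$, and the assertion $\deg((y_{1}, \partial y_{2}/\partial\varepsilon)(0,\cdot), U, 0) \neq 0$. Both reduce to first-order Taylor expansions around $(0, p)$, together with the observation that on $\partial U$ the max-norm structure forces at least one of $\|x_{1} - p_{1}\| = R$ or $\|x_{2} - p_{2}\| = R$.

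For (\ref{eq:zero-not-on-boundary}), fix $\varepsilon' \in E$ and $x \in \partial U$, and set $v = x - p = (v_{1}, v_{2})$. If $\|v_{1}\| = R$, then using $y_{1}(0, p) = 0$, the identity
\[
y_{1}(\varepsilon', x) = \varepsilon' \int_{0}^{1} \frac{\partial y_{1}}{\partial \varepsilon}(u\varepsilon', p)\,du + A_{11} v_{1} + \int_{0}^{1} \Bigl[\frac{\partial y_{1}}{\partial x}(\varepsilon', p+uv) - (A_{11}, 0)\Bigr] v\,du
\]
together with the definition of $\Delta_{1}$ and (\ref{eq:estmA11}) gives $\|y_{1}(\varepsilon', x)\| \geq m(A_{11}) R - \epsilon \|\partial y_{1}/\partial \varepsilon(E, p)\| - \|\Delta_{1}\| R > 0$. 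If instead $\|v_{2}\| = R$, the analogous expansion of $\partial y_{2}/\partial \varepsilon(\varepsilon', \cdot)$ around $p$, using that the integrand $\partial^{2} y_{2}/(\partial \varepsilon \partial x)(\varepsilon', \cdot) - (0, A_{22})$ lies in $\Delta_{2}$, combined with (\ref{eq:estmA22}), yields $\|\partial y_{2}/\partial \varepsilon(\varepsilon', x)\| > 0$. In either case $(y_{1}, \partial y_{2}/\partial \varepsilon)(\varepsilon', x) \neq 0$, so (\ref{eq:zero-not-on-boundary}) holds.

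For the degree, let $g(x) := (y_{1}(0, x), \partial y_{2}/\partial \varepsilon(0, x))$ and consider the homotopy
\[
K(\mu, x) = (1 - \mu)\, \tilde{A}(x - p) + \mu\, g(x), \qquad \tilde{A} = \mathrm{diag}(A_{11}, A_{22}).
\]
Rerunning the two coordinate estimates above with $\varepsilon' = 0$ (so the $\partial y_{1}/\partial \varepsilon$ term drops out in the first coordinate, and the constant $\partial y_{2}/\partial \varepsilon(0, p)$ appearing in the second is absorbed by $\|\partial y_{2}/\partial \varepsilon(E, p)\|$) shows $K(\mu, x) \neq 0$ on $\partial U$ for every $\mu \in [0, 1]$. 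Conditions (\ref{eq:estmA11}) and (\ref{eq:estmA22}) force $m(A_{11}), m(A_{22}) > 0$, hence $A_{11}$ and $A_{22}$ are invertible, and the homotopy and affine-map properties of the degree give
\[
\deg(g, U, 0) = \deg(\tilde{A}(\cdot - p), U, 0) = \mathrm{sgn}(\det A_{11})\,\mathrm{sgn}(\det A_{22}) \neq 0.
\]
The preceding lemma then delivers $\deg(y(\varepsilon, \cdot), U, 0) \neq 0$ for every $\varepsilon \in (0, \epsilon]$.

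The main obstacle to navigate is that $g(p)$ is in general nonzero -- only its first component vanishes, while $\partial y_{2}/\partial \varepsilon(0, p)$ can be arbitrary -- so one cannot linearise $g$ at $p$ directly. The device that makes the argument go through is the block-diagonal target $\tilde{A}$: combined with the max-norm on $U$, it decouples the two coordinate estimates and lets $\Delta_{1}$ and $\Delta_{2}$ absorb the off-diagonal blocks $A_{12}, A_{21}$ entirely, so only the diagonal blocks $A_{11}$ and $A_{22}$ appear in the final inequalities.
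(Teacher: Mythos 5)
Your route differs from the paper's. You reduce to the preceding (unnumbered) lemma and then supply the degree input with a second homotopy from $(y_1,\partial y_2/\partial\varepsilon)(0,\cdot)$ to the block-diagonal affine map $\tilde A(x-p)$; the paper instead proves the lemma in one stroke, building a single explicit homotopy from $y(\varepsilon,\cdot)$ all the way to the affine map $x\mapsto\left(\begin{smallmatrix}A_{11}&A_{12}\\ \varepsilon A_{21}&\varepsilon A_{22}\end{smallmatrix}\right)(x-p)$ and then invoking the degree property for affine maps. Your degree step is correct, and your closing observation --- that the block-diagonal target together with the max-norm splitting lets $\Delta_1,\Delta_2$ absorb the off-diagonal blocks and the constant offset $\partial y_2/\partial\varepsilon(0,p)$ --- is exactly the mechanism the paper's own boundary estimates use.

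There is, however, a real gap in the first step. Hypothesis (\ref{eq:zero-not-on-boundary}) of the preceding lemma is $0\notin\left[(y_1,\partial y_2/\partial\varepsilon)(E,x)\right]$, where $[\cdot]$ denotes the \emph{interval enclosure}, i.e.\ a coordinate-aligned box; the condition therefore demands that some single scalar coordinate of $(y_1,\partial y_2/\partial\varepsilon)(\varepsilon',x)$ keep a fixed sign as $\varepsilon'$ runs over $E$. What your estimate delivers is only the pointwise statement $(y_1,\partial y_2/\partial\varepsilon)(\varepsilon',x)\neq 0$ for every $\varepsilon'\in E$, which is strictly weaker for the arbitrary norms the paper allows on $\mathbb R^{k_1}$ and $\mathbb R^{k_2}$: for instance with the Euclidean norm on $\mathbb R^2$, a set confined to $c+B(0,r)$ with $r<\|c\|$ but $r>\|c\|_\infty$ avoids the origin yet has an interval hull that contains it. Your bounds actually prove something in between --- uniform confinement of the $\varepsilon'$-family to a translate of a norm ball missing the origin --- and because the homotopy $H$ inside the preceding lemma's proof only needs $y_1(\lambda\varepsilon,x)$ and the average $\int_0^1\partial y_2/\partial\varepsilon(\lambda u\varepsilon,x)\,du$ to be jointly nonzero, and that average remains in the convex hull of your ball, the argument is repairable by inlining that homotopy and checking it against the ball estimate rather than citing the lemma's stated hypothesis (or, alternatively, by restricting to $\ell^\infty$ norms on each factor, for which pointwise nonvanishing of the ball-confined family does yield a sign-definite coordinate). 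The paper sidesteps the issue entirely by verifying its single homotopy pointwise on $\partial U$ and never passing through an interval enclosure.
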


\begin{proof}
By (\ref{eq:y2-zero}) we see that for any $x\in\overline{U},$ $\frac{\partial
y_{2}}{\partial x}\left(  0,x\right)  =0$. For any $x\in\overline{U}$,%
\begin{align}
y_{2}\left(  \varepsilon,x\right)  -y_{2}\left(  \varepsilon,p\right)   &
=\int_{0}^{1}\frac{d}{ds}y_{2}\left(  \varepsilon,p+s\left(  x-p\right)
\right)  ds\label{eq:y1-difference}\\
&  =\int_{0}^{1}\frac{\partial y_{2}}{\partial x}\left(  \varepsilon
,p+s\left(  x-p\right)  \right)  ds\cdot\left(  x-p\right) \nonumber\\
&  =\varepsilon\int_{0}^{1}\int_{0}^{1}\frac{\partial^{2}y_{2}}{\partial
\varepsilon\partial x}\left(  u\varepsilon,p+s\left(  x-p\right)  \right)
du\,ds\left(  x-p\right)  .\nonumber
\end{align}
Also%
\begin{align}
y_{1}\left(  \varepsilon,x\right)  -y_{1}\left(  \varepsilon,p\right)   &
=\int_{0}^{1}\frac{d}{ds}y_{1}\left(  \varepsilon,p+s\left(  x-p\right)
\right)  ds\label{eq:y2-difference}\\
&  =\int_{0}^{1}\frac{\partial y_{1}}{\partial x}\left(  \varepsilon
,p+s\left(  x-p\right)  \right)  ds\cdot\left(  x-p\right)  .\nonumber
\end{align}
We will also use the fact that%
\begin{equation}
y\left(  \varepsilon,p\right)  =\varepsilon\int_{0}^{1}\frac{\partial
y}{\partial\varepsilon}\left(  u\varepsilon,p\right)  du. \label{eq:y0-bd}%
\end{equation}

Consider now a homotopy%
\[
H\left(  \lambda,x\right)  =\left(
\begin{array}
[c]{c}%
H_{1}\left(  \lambda,x\right) \\
H_{2}\left(  \lambda,x\right)
\end{array}
\right)  ,
\]
defined as%
\begin{align*}
H_{1}\left(  \lambda,x\right)  =  & \varepsilon\lambda\int_{0}^{1}%
\frac{\partial y_{1}}{\partial\varepsilon}\left(  u\varepsilon,p\right)  du  +\left(  \int_{0}^{1}\frac{\partial y_{1}}{\partial x}\left(
\lambda\varepsilon,p+\lambda s\left(  x-p\right)  \right)  ds\right)  \left(
x-p\right)  ,\\
H_{2}\left(  \lambda,x\right)  =  & \varepsilon\left[  \lambda\int_{0}%
^{1}\frac{\partial y_{2}}{\partial\varepsilon}\left(  u\varepsilon,p\right)
du\right.   \left.  +\left(  \int_{0}^{1}\int_{0}^{1}\frac{\partial^{2}y_{2}}%
{\partial\varepsilon\partial x}\left(  \lambda u\varepsilon,p+\lambda s\left(
x-p\right)  \right)  du\,ds\right)  \left(  x-p\right)  \right]  .
\end{align*}
We have chosen such homotopy since from (\ref{eq:y1-difference}%
--\ref{eq:y0-bd}) we see that%
\[
H\left(  1,x\right)  =y\left(  \varepsilon,x\right)  ,
\]
and%
\[
H\left(  0,x\right)  =\left(
\begin{array}
[c]{rr}%
\frac{\partial y_{1}}{\partial x_{1}}\left(  0,p\right)  & \frac{\partial
y_{1}}{\partial x_{2}}\left(  0,p\right) \\
\varepsilon\frac{\partial^{2}y_{2}}{\partial\varepsilon\partial x_{1}}\left(
0,p\right)  & \varepsilon\frac{\partial^{2}y_{2}}{\partial\varepsilon\partial
x_{2}}\left(  0,p\right)
\end{array}
\right)  (x-p)=\left(
\begin{array}
[c]{cc}%
A_{11} & A_{12}\\
\varepsilon A_{12} & \varepsilon A_{22}%
\end{array}
\right)  (x-p).
\]

We now will want to use the homotopy property to ensure that the degree is
defined and is the same along the homotopy $H$. For this we need to check that
$H\left(  \lambda,\partial U\right)  \neq0$ (see Remark
\ref{rem:homotopy-prop}). Take $x\in\partial U$, meaning that $\left\Vert
x-p\right\Vert =R$.

We have two cases $\|p_{1} - x_{1}\|=R$ and $\|p_{2} - x_{2}\|=R$.

If $\|p_{1} - x_{1}\|=R$, then
\begin{align*}
&  \left\Vert H_{1}\left(  \lambda,x\right)  \right\Vert \\
&  =\left\Vert \varepsilon\lambda\int_{0}^{1}\frac{\partial y_{1}}
{\partial\varepsilon}\left(  u\varepsilon,p\right)  du+\int_{0}^{1}%
\frac{\partial y_{1}}{\partial x}\left(  \lambda\varepsilon,p+\lambda s\left(
x-p\right)  \right)  ds\left(  x-p\right)  \right\Vert \\
&  \geq-\epsilon\left\Vert \frac{\partial y_{1}}{\partial\varepsilon}\left(
E,p\right)  \right\Vert +m\left(  A_{11}\right)  R-\left\Vert \int_{0}%
^{1}\frac{\partial y_{1}}{\partial x}\left(  \lambda\varepsilon,p+\lambda
s\left(  x-p\right)  \right)  -(A_{11},0) ds \right\Vert R\\
&  =-\epsilon\left\Vert \frac{\partial y_{1}}{\partial\varepsilon}\left(
E,p\right)  \right\Vert +m\left(  A_{11}\right)  R-\left\Vert \Delta
_{1}\right\Vert R >0.
\end{align*}

If $\|p_{2} -x_{2}\|=R$, then
\begin{align*}
\left\Vert \frac{1}{\varepsilon}H_{2}\left(  \lambda,x\right)  \right\Vert  &
=\left\Vert \lambda\int_{0}^{1}\frac{\partial y_{2}}{\partial\varepsilon
}\left(  u\varepsilon,p\right)  du\right. \\
&  \quad\left.  +\left(  \int_{0}^{1}\int_{0}^{1}\frac{\partial^{2}y_{2}%
}{\partial\varepsilon\partial x}\left(  \lambda u\varepsilon,p+\lambda
s\left(  x-p\right)  \right)  du\,ds\right)  \left(  x-p\right)  \right\Vert
\\
&  \geq-\left\Vert \frac{\partial y_{2}}{\partial\varepsilon}\left(
E,p\right)  \right\Vert +m\left(  A_{22}\right)  R\\
&  \quad-\left\Vert \int_{0}^{1}\int_{0}^{1}\frac{\partial^{2}y_{2}}%
{\partial\varepsilon\partial x}\left(  \lambda u\varepsilon,p+\lambda s\left(
x-p\right)  \right)  du\,ds-(0,A_{22})\right\Vert R\\
&  =-\left\Vert \frac{\partial y_{2}}{\partial\varepsilon}\left(  E,p\right)
\right\Vert +m\left(  A_{22}\right)  R-\left\Vert \Delta_{2}\right\Vert R >0.
\end{align*}

Observe that $H(0,x) \neq0$ for $x \in\partial U=\partial B(p_{0},R) $ implies
that $A$ is invertible.

Thus, by the homotopy property and the degree for affine maps
\[
\deg\left(  y\left(  \varepsilon,\cdot\right)  ,U,0\right)  =\deg\left(
H_{1}\left(  \cdot\right)  ,U,0\right)  =\deg\left(  H_{0}\left(
\cdot\right)  ,U,0\right)  =\mathrm{sgn}\det A \neq0
\]
as required.
\end{proof}

\subsection{Transversal zeros}

In the applications we have in mind the function $y$ will be obtained as
difference of two graphs of functions representing manifolds, i.e.
\begin{equation}
y(\varepsilon,x)= w(\varepsilon,x) - v(\varepsilon,x).\label{eq:y=diff}%
\end{equation}

Let us fix an open set $U \subset\mathbb{R}^{k_{1}+k_{2}}$ and let
$W_{\varepsilon}=\{(x,w(\varepsilon,x)), \quad x \in U\}$ and $V_{\varepsilon
}=\{(x,v(\varepsilon,x)), \quad x\in U\}$.

\begin{lemma}
\label{lem:trans}  Consider the same assumptions as in Lemma~\ref{lem:y-zero-practical}
concerning function $y$ given by (\ref{eq:y=diff}).
Then 
\begin{itemize}
\item any matrix $M
\in\left[  \frac{\partial y}{\partial x}(E,U)\right] $ is an isomorphism
\item for every $\varepsilon\in(0,\epsilon_{0} ]$ the manifolds $W_{\varepsilon
}$ and $V_{\varepsilon}$ have a unique intersection which is transversal.
\end{itemize}
\end{lemma}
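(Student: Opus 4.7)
The plan is to reuse the block structure of the Jacobian that was exploited in the proof of Lemma~\ref{lem:y-zero-practical}. The key observation is that (\ref{eq:y2-zero}) implies $\frac{\partial y_2}{\partial x}(0,x)\equiv 0$, so the fundamental theorem of calculus gives
\[
\frac{\partial y_2}{\partial x}(\varepsilon,x) = \varepsilon \int_0^1 \frac{\partial^2 y_2}{\partial \varepsilon \partial x}(u\varepsilon,x)\,du
\]
for every $(\varepsilon,x) \in E \times \overline{U}$. Consequently, for $\varepsilon>0$, any matrix $M \in \left[\frac{\partial y}{\partial x}(E,U)\right]$ inherits a block decomposition
\[
M = \begin{pmatrix} M_{11} & M_{12} \\ \varepsilon N_{21} & \varepsilon N_{22} \end{pmatrix},
\]
with $(M_{11},M_{12}) \in \left[\frac{\partial y_1}{\partial x}(E,U)\right]$ and $(N_{21},N_{22}) \in \left[\frac{\partial^2 y_2}{\partial \varepsilon \partial x}(E,U)\right]$, which is exactly the structure of the matrix $A$ in Lemma~\ref{lem:y-zero-practical}.

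To prove invertibility I will mimic the case analysis at the end of the proof of Lemma~\ref{lem:y-zero-practical}. Given any nonzero $\xi=(\xi_1,\xi_2)$, I split on which of $\|\xi_1\|$, $\|\xi_2\|$ equals $\|\xi\|$. When $\|\xi_1\|=\|\xi\|$, the definition of $\Delta_1$ yields $\|(M\xi)_1\| \geq m(A_{11})\|\xi_1\| - \|\Delta_1\|\|\xi\|$; when $\|\xi_2\|=\|\xi\|$, the definition of $\Delta_2$ gives $\|(M\xi)_2\| \geq \varepsilon\bigl(m(A_{22})\|\xi_2\| - \|\Delta_2\|\|\xi\|\bigr)$. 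Dropping the nonnegative $\|\partial y/\partial\varepsilon\|$ contributions from (\ref{eq:estmA11})–(\ref{eq:estmA22}) yields $m(A_{11})>\|\Delta_1\|$ and $m(A_{22})>\|\Delta_2\|$, so each lower bound is strictly positive for $\varepsilon>0$, establishing $M\xi \neq 0$.

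For the second bullet, transversality at any intersection point $q$ of $W_\varepsilon$ and $V_\varepsilon$ reduces to a routine linear-algebra computation: the tangent spaces $T_q W_\varepsilon$ and $T_q V_\varepsilon$ are graphs of $Dw(\varepsilon,\cdot)$ and $Dv(\varepsilon,\cdot)$, and their sum spans the ambient space precisely when $Dw-Dv = \frac{\partial y}{\partial x}$ is an isomorphism, which is exactly the first bullet. Existence of at least one intersection is already supplied by Lemma~\ref{lem:y-zero-practical}. Uniqueness will follow from a mean-value argument: if $y(\varepsilon,x^1)=y(\varepsilon,x^2)=0$ with $x^1,x^2 \in U$, convexity of $U=B(p,R)$ gives
\[
0 = y(\varepsilon,x^1)-y(\varepsilon,x^2) = \left(\int_0^1 \frac{\partial y}{\partial x}(\varepsilon,x^2+t(x^1-x^2))\,dt\right)(x^1-x^2),
\]
and the mean-value matrix inherits the same rescaled block structure as $M$, hence the same invertibility, forcing $x^1=x^2$.

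The main delicate point is recognizing and exploiting the $\varepsilon$-factorization of the lower half of the Jacobian. Without it the first bullet would be vacuous at $\varepsilon=0$, and one has to propagate the dominance estimate of Lemma~\ref{lem:y-zero-practical} uniformly over the whole box $E\times\overline{U}$ rather than only at the base point $(0,p)$.
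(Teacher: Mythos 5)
Your proof is correct and follows essentially the same route as the paper: you factor out $\varepsilon$ from the lower block of the Jacobian using (\ref{eq:y2-zero}), reduce invertibility to $m(A_{11})>\|\Delta_1\|$ and $m(A_{22})>\|\Delta_2\|$ via the same max-norm case split, characterize transversality by noting that the sum of the graph tangent spaces spans precisely when $\partial y/\partial x$ is an isomorphism, and obtain uniqueness from the mean-value form of $y(\varepsilon,x^1)-y(\varepsilon,x^2)$. Your explicit remark that the mean-value matrix inherits the $\varepsilon$-factored block structure (rather than merely citing membership in the interval hull) is a clean way to justify the last step, and your restriction to $\varepsilon>0$ in the first bullet correctly reflects what the argument actually establishes.
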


\begin{proof}
The existence of the intersection follows from
Lemma~\ref{lem:y-zero-practical}.

For the uniqueness and transversality let us consider  any matrix $M
\in\left[  \frac{\partial y}{\partial x}(E,U)\right] $. Since our $x$ and $y$
variables are split in two components i.e. $y(x)=(y_{1}(x_{1},x_{2}%
),y_{2}(x_{1},x_{2}))$  we can also characterise matrix $M$ by its components
$M_{ij}$ satisfying $M_{ij} \in\frac{\partial y_{i}}{\partial x_{j}}(E,U)$.

We see that
\[
\left(  M_{11},M_{12}\right)  -\left(  A_{11},0\right)  \in\left[
\frac{\partial y_{1}}{\partial x}(E,U)\right]  -\left(  \frac{\partial y_{1}%
}{\partial x_{1}}(0,p),0\right)  =\Delta_{1}.
\]
Since $y_{2}\left(  \varepsilon=0,\cdot\right)  |_{U}\equiv0$ we see that for
any $\varepsilon\in E$ and $x\in U$%
\[
\frac{\partial y_{2}}{\partial x}(\varepsilon,x)=\frac{\partial y_{2}%
}{\partial x}(0,x)+\int_{0}^{1}\frac{d}{ds}\frac{\partial y_{2}}{\partial
x}(s\varepsilon,x)=\varepsilon\int_{0}^{1}\frac{\partial^{2}y_{2}}%
{\partial\varepsilon\partial x}(s\varepsilon,x)\in\varepsilon\left[
\frac{\partial^{2}y_{2}}{\partial x\partial\varepsilon}(E,U)\right]  ,
\]
so
\begin{multline*}
\left(  M_{21},M_{22}\right)  -\varepsilon\left(  0,A_{22}\right)  \in\left[
\frac{\partial y_{2}}{\partial x}(E,U)\right]  -\varepsilon\left(
0,\frac{\partial^{2}y_{2}}{\partial\varepsilon\partial x_{2}}(0,p)\right)  \\
\subset\varepsilon\left(  \left[  \frac{\partial^{2}y_{2}}{\partial
x\partial\varepsilon}(E,U)\right]  -\left(  0,\frac{\partial^{2}y_{2}%
}{\partial\varepsilon\partial x_{2}}(0,p)\right)  \right)  =\varepsilon
\Delta_{2}.
\end{multline*}

Observe that from (\ref{eq:estmA11},\ref{eq:estmA22}) it follows that
\[
m(A_{11})>\Vert\Delta_{1}\Vert,\qquad m(A_{22})>\Vert\Delta_{2}\Vert.
\]

We are now ready to show that $M$ is an isomorphism. For this it is enough to
show that for any $x$ for which $\max\left(  \Vert x_{1}%
\Vert,\Vert x_{2}\Vert\right)  =1$ holds $Mx\neq0$.

If $\|x_{1}\|=1$, then
\begin{align*}
\|(Mx)_{1}\|  & = \|A_{11}x_{1} + \left( (M_{11},M_{12}) - (A_{11},0)\right) x
\| \geq\|A_{11}x_{1}\| - \|\Delta_{1}\| \geq\\
&   m(A_{11}) - \|\Delta_{1}\| >0.
\end{align*}

If $\Vert x_{2}\Vert=1$, then
\[
\Vert(Mx)_{2}\Vert=\Vert\varepsilon A_{22}x_{2}+\left(  (M_{21},M_{22}%
)-\varepsilon\left(  0,A_{22}\right)  \right)  x\Vert\geq\varepsilon\left(
m(A_{22})-\Vert\Delta_{2}\Vert\right)  >0.
\]
Therefore we see that any $M\in\left[  \frac{\partial y}{\partial
x}(E,U)\right]  $ is an isomorphism.

By changing a coordinate system $(x,y)\mapsto(x,y-v(\varepsilon,x))$ we obtain
that $V_{\varepsilon}=\{(x,0),\quad x\in U\}$ and $W_{\varepsilon
}=\{(x,w(\varepsilon,x)-v(\varepsilon,x)),\quad x\in U\}$.

To establish the transversality observe that
\begin{align*}
T_{(x,0)}V_{\varepsilon}  & =\{(z,0)\quad z\in\mathbb{R}^{k_{1}+k_{2}}\},\\
T_{(x,y(\varepsilon,x))}W_{\varepsilon}  & =\left\{  \left(  z,\frac{\partial
y}{\partial x}(\varepsilon,x)z\right)  \quad z\in\mathbb{R}^{k_{1}+k_{2}%
}\right\}  .
\end{align*}
The intersection is transversal if $\frac{\partial y}{\partial x}%
(\varepsilon,x)$ is in isomorphism. We know that any $M\in\left[
\frac{\partial y}{\partial x}(E,U)\right]  $ is an isomorphism. Since
$\frac{\partial y}{\partial x}(\varepsilon,x)\in\left[  \frac{\partial
y}{\partial x}(E,U)\right]  $, then this is the case.

For the uniqueness observe that
\begin{align*}
y\left(  \varepsilon,z_{1}\right)  -y\left(  \varepsilon,z_{2}\right)    &
=\int_{0}^{1}\frac{d}{dt}y\left(  \varepsilon,z_{2}+t\left(  z_{1}%
-z_{2}\right)  \right)  dt\\
& =\int_{0}^{1}\frac{\partial y}{\partial x}\left(  \varepsilon,z_{2}+t\left(
z_{1}-z_{2}\right)  \right)  dt(z_{1}-z_{2}).
\end{align*}
Since $\int_{0}^{1}\frac{\partial y}{\partial x}\left(  \varepsilon
,z_{2}+t\left(  z_{1}-z_{2}\right)  \right)  dt\in\left[  \frac{\partial
y}{\partial x}(E,U)\right]  $, hence it is an isomorphism. Therefore $y\left(
\varepsilon,z_{1}\right)  -y\left(  \varepsilon,z_{2}\right)  \neq0$ if
$z_{1}\neq z_{2}$.
\end{proof}

%TCIDATA{OutputFilter=latex2.dll}
%TCIDATA{Version=5.00.0.2606}
%TCIDATA{LaTeXparent=0,0,online-edit.tex}

\section{Melnikov type results \label{sec:Melnikov}}

In this section we discuss how to apply the results from section
\ref{sec:zeros} to detect intersections of invariant manifolds under
perturbation. The narrative will start from the simplest setting of a
hyperbolic fixed point, and move gradually through cases of increasing generality.

Before we proceed, we set up notations. We consider a smooth family of ODEs,%
\begin{equation}
q^{\prime}=f_{\varepsilon}\left(  q\right)  , \label{eq:ode-family}%
\end{equation}
parameterised by a one dimensional parameter $\varepsilon\in E:=\left[
0,\epsilon\right]  $. We use the notation $\Phi_{t}^{\varepsilon}$ for the
flow induced by (\ref{eq:ode-family}). We assume that the state space is of
dimension $n$, meaning that $f_{\varepsilon}:\mathbb{R}^{n}\rightarrow
\mathbb{R}^{n}$.

For $\varepsilon=0$ we assume that (\ref{eq:ode-family}) has a normally
hyperbolic invariant manifold $\Lambda_{0}$ in $\mathbb{R}^{n}$. We assume
that this manifold survives for all $\varepsilon\in E$ and is perturbed to
$\Lambda_{\varepsilon}$. We shall use $c$ to denote the dimension of
$\Lambda_{\varepsilon}$. We use the notations $W_{\Lambda_{\varepsilon}}^{cu}$
and $W_{\Lambda_{\varepsilon}}^{cs}$ to denote the center-unstable and
center-stable sets to $\Lambda_{\varepsilon}$, respectively:%
\begin{align*}
W_{\Lambda_{\varepsilon}}^{cu}  &  =\left\{  p:\lim_{t\rightarrow-\infty
}\mathrm{dist}\left(  \Phi_{t}^{\varepsilon}\left(  p\right)  ,\Lambda
_{\varepsilon}\right)  =0\right\}  ,\\
W_{\Lambda_{\varepsilon}}^{cs}  &  =\left\{  p:\lim_{t\rightarrow+\infty
}\mathrm{dist}\left(  \Phi_{t}^{\varepsilon}\left(  p\right)  ,\Lambda
_{\varepsilon}\right)  =0\right\}  .
\end{align*}
We assume that the dimension of the unstable coordinate of $\Lambda
_{\varepsilon}$ is $u$, and that the dimension of the stable coordinate is
$s$. This means that $W_{\Lambda_{\varepsilon}}^{cu}$ is of dimension $c+u$
and that $W_{\Lambda_{\varepsilon}}^{cs}$ is of dimension $c+s$.

We consider a certain fixed neighbourhood $U$ of $\Lambda$, within which we
will assume that we have parameterisations of class $C^{2}$ of local
center-unstable and local center-stable manifolds. To be more precise, we
assume that we have two $C^{2}$ functions%
\begin{align*}
w^{cu}  &  :E\times B_{u}\times\Lambda\rightarrow\mathbb{R}^{n},\\
w^{cu}  &  :E\times B_{s}\times\Lambda\rightarrow\mathbb{R}^{n},
\end{align*}
such that:

\begin{enumerate}
\item If $p\in W_{\Lambda_{\varepsilon}}^{cu}\cap U$ and $\Phi_{t}%
^{\varepsilon}\left(  p\right)  \in U$ for all $t\leq0$, then $p=w^{cu}\left(
\varepsilon,\mathrm{cu}\right)  $ for some $\left(  \varepsilon,\mathrm{cu}%
\right)  \in E\times\left(  B_{u}\times\Lambda\right)  $.

\item If $p\in W_{\Lambda_{\varepsilon}}^{cs}\cap U$ and $\Phi_{t}%
^{\varepsilon}\left(  p\right)  \in U$ for all $t\geq0$, then $p=w^{cs}\left(
\varepsilon,\mathrm{cs}\right)  $, for some $\left(  \varepsilon
,\mathrm{cs}\right)  \in E\times\left(  B_{s}\times\Lambda\right)  $.
\end{enumerate}

Here we use the notation $\mathrm{cu}$ (and $\mathrm{cs}$) to stand for the
parameter of the center-unstable (and center-stable) coordinates.

\begin{remark}
In the case of a non autonomous system $q^{\prime}=f_{\varepsilon}(t,q)$ we
can extend the state space to include the time and treat the time (in the
extended phase space) as a central coordinate. This means that above setting
can also be applied to non autonomous systems.
\end{remark}

In subsequent sections we will show how to reduce the problem of finding
intersections of manifolds to finding zeros of functions. In other words, we
will reduce the problem to the setting from section \ref{sec:zeros}. Depending
on the system we can have different setting of the manifolds prior to
perturbation. On one extreme end, for $\varepsilon=0$ we could have two
coinciding manifolds, meaning that $W_{\Lambda_{0}}^{cu}=W_{\Lambda_{0}}^{cs}%
$. On the other end, we could have transversal intersections of $W_{\Lambda
_{0}}^{cu}$, $W_{\Lambda_{0}}^{cs}$. In between is the case where on some
coordinates the manifolds $W_{\Lambda_{0}}^{cu}$ and $W_{\Lambda_{0}}^{cs}$
coincide, and in other coordinates their intersections are transversal. A
typical example of such setting would be a non fully integrable Hamiltonian
system with several integrals of motion. The integrals of motion are the
coordinates on which the manifolds coincide, but on other coordinates their
intersections could be transversal. The particular case of how the manifolds
$W_{\Lambda_{0}}^{cu}$ and $W_{\Lambda_{0}}^{cs}$ intersect prior to the
perturbation will determine the dimensions $k_{1}$ and $k_{2}$ in
(\ref{eq:y1-y2-setting}). If the manifolds coincide, i.e. $W_{\Lambda_{0}%
}^{cu}=W_{\Lambda_{0}}^{cs}$, then $k_{1}=0$ and $k_{2}=k$. If the
intersection of the manifolds is transversal, then $k_{1}=k$ and $k_{2}=0$.
Finally in the setting where we have $l$ integrals of motion and in the
remaining coordinates we have transversal intersection, $k_{1}=k-l$ and
$k_{2}=l$.

We now discuss several cases in which we show how the approach from section
\ref{sec:zeros} can be applied to prove transversal intersections of
$W_{\Lambda_{\varepsilon}}^{cu}$ and $W_{\Lambda_{\varepsilon}}^{cs}$. We
start with the simplest setting and then build up the generality as we
progress through the section.

\subsection{The case of a hyperbolic fixed point with stable/unstable
manifolds of equal dimension\label{sec:fixed-pt}}

In this section we assume that $\Lambda_{\varepsilon}=\left\{  \lambda
_{\varepsilon}\right\}  $ consists of a family of hyperbolic fixed points. In
this reduced setting we shall write $W_{\Lambda_{\varepsilon}}^{u}$,
$W_{\Lambda_{\varepsilon}}^{s},$ $w^{u}\left(  \varepsilon,\mathrm{u}\right)
$ and $w^{s}\left(  \varepsilon,\mathrm{s}\right)  $ instead of $W_{\Lambda
_{\varepsilon}}^{cu}$, $W_{\Lambda_{\varepsilon}}^{cs},$ $w^{cu}\left(
\varepsilon,\mathrm{cu}\right)  $ and $w^{cs}\left(  \varepsilon,\mathrm{cs}%
\right)  $, respectively. (This is because there is no `center' coordinate to
consider, and the center-stable/center-unstable manifolds are in fact simply
stable/unstable.) In this section we consider the case when prior to
perturbation, for $\varepsilon=0$, the two manifolds $W_{\Lambda_{0}}^{s}$ and
$W_{\Lambda_{0}}^{u}$ are of the same dimension. Let us denote both these
dimensions by $k$ (meaning that $u=s=k$, $c=0$ and $n=c+u+s=2k$).

The most direct setting in which we can apply the approach from section
\ref{sec:zeros} to the detection of intersections of $W_{\Lambda_{\varepsilon
}}^{s}$ and $W_{\Lambda_{\varepsilon}}^{u}$ is when they are graphs over the
same domain. This is what is discussed in below motivating example:

\begin{example}
Assume that we have coordinates $\left(  x,y\right)  \in\mathbb{R}^{k}%
\times\mathbb{R}^{k}=\mathbb{R}^{n},$ such that%
\begin{equation}
\pi_{x}w^{u}\left(  \varepsilon,x\right)  =\pi_{x}w^{s}\left(  \varepsilon
,x\right)  =x. \label{eq:wu-ws-graphs-for-point}%
\end{equation}
We can define
\[
y\left(  \varepsilon,x\right)  :=\pi_{y}w^{u}\left(  \varepsilon,x\right)
-\pi_{y}w^{s}\left(  \varepsilon,x\right)  ,
\]
and apply Lemma \ref{lem:zero-from-degree} to establish that for any
$\varepsilon\in(0,\epsilon]$ we have intersections of $W_{\Lambda
_{\varepsilon}}^{u}$ with $W_{\Lambda_{\varepsilon}}^{s}$.
\end{example}

The setting in which the stable and unstable manifolds away from
$\Lambda_{\epsilon}$ would be given to us as graphs over the same coordinates
is rare. One needs some extra work to achieve this.

We assume that locally we have coordinate system $(x,y)\in\mathbb{R}^{k}%
\times\mathbb{R}^{k}$ such that for each $\varepsilon\in E$ the projection of
$W_{\lambda_{\varepsilon}}^{u,s}$ onto $x$ is a diffeomorphism, i.e. we have:

\begin{description}
\item[A] $\left[  \frac{\partial\pi_{x}w^{u}}{\partial\mathrm{u}}%
(E\times\overline{U^{\prime}})\right]  $ is an isomorphism, where $U^{\prime
}\subset\mathbb{R}^{k}$ is some bounded open set, and where $E=[0,\epsilon]$,

\item[B] $\left[  \frac{\partial\pi_{x}w^{s}}{\partial\mathrm{s}}%
(E\times\overline{S^{\prime}})\right]  $ is an isomorphism, where $S^{\prime
}\subset\mathbb{R}^{k}$ is some bounded open set.
\end{description}

Assume also that we have an open set $U\subset\mathbb{R}^{k}$, for which in the local coordinates
\begin{equation}
U\subset\left(  \pi_{x}w^{u}(E\times U^{\prime})\cap\pi_{x}w^{s}(E\times
S^{\prime})\right)  .\label{eq:common-projection-1}%
\end{equation}
Now we will change the parameterisation of manifolds $W_{\Lambda_{\varepsilon
}}^{u}$ and $W_{\Lambda_{\varepsilon}}^{s}$ so that they become graphs of
functions depending on $(\varepsilon,x)$. For this we assume that we can find
$C^{2}$ functions $\mathrm{u}:\mathbb{R\times R}^{k}\supset E\times
U\rightarrow\mathbb{R}^{k}$ and $\mathrm{s}:\mathbb{R\times R}^{k}\supset
E\times U\rightarrow\mathbb{R}^{k}$, for which for all $\varepsilon\in E$ and
$x\in U$%
\begin{equation}%
\begin{array}
[c]{r}%
\pi_{x}w^{u}\left(  \varepsilon,\mathrm{u}(\varepsilon,x)\right)
=x,\smallskip\\
\pi_{x}w^{s}\left(  \varepsilon,\mathrm{s}(\varepsilon,x)\right)  =x.
\end{array}
\label{eq:implicit-cond-fixed-pt}%
\end{equation}
Observe that conditions \textbf{A} and \textbf{B} together with
(\ref{eq:common-projection-1}) imply that functions $\mathrm{u}(\varepsilon
,x)$ and $\mathrm{s}(\varepsilon,x)$ exists and are as smooth as $w^{u}$ and
$w^{s}$.

Observe now that locally the manifolds $W_{\Lambda_{\varepsilon}}^{u}$ and
$W_{\Lambda_{\varepsilon}}^{s}$ have the expressions
\[
\{(x,\pi_{y}w^{u}(\varepsilon,\mathrm{u}(\varepsilon,x)):\varepsilon\in E,x\in
U\}\qquad\text{and}\qquad\{(x,\pi_{y}w^{s}(\varepsilon,\mathrm{s}%
(\varepsilon,x)):\varepsilon\in E,x\in U\},
\]
respectively. Let us define $y:\mathbb{R\times R}^{k}\supset E\times
U\rightarrow\mathbb{R}^{k}$ as
\begin{equation}
y\left(  \varepsilon,x\right)  :=\pi_{y}w^{u}\left(  \varepsilon
,\mathrm{u}(\varepsilon,x)\right)  -\pi_{y}w^{s}\left(  \varepsilon
,\mathrm{s}(\varepsilon,x)\right)  . \label{eq:f-for-fixed-pt}%
\end{equation}

\begin{theorem}
\label{th:fixed-pt}Assume that $\Lambda_{\varepsilon}=\left\{  \lambda
_{\varepsilon}\right\}  $ is a family of hyperbolic fixed points, that
$\mathrm{u}(\varepsilon,x)$ and $\mathrm{s}(\varepsilon,x)$, are functions for
which (\ref{eq:implicit-cond-fixed-pt}) holds true, and let $y\left(
\varepsilon,x\right)  $ be defined by (\ref{eq:f-for-fixed-pt}). If
assumptions of Lemma \ref{lem:zero-from-degree} are satisfied, then for any
$\varepsilon\in(0,\epsilon]$ the manifolds $W_{\Lambda_{\varepsilon}}^{u}$,
$W_{\Lambda_{\varepsilon}}^{s}$ intersect.
\end{theorem}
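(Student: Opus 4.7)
The plan is short and essentially a direct application: invoke Lemma~\ref{lem:zero-from-degree} for the function $y(\varepsilon,x)$ defined in (\ref{eq:f-for-fixed-pt}), and then translate the algebraic conclusion back into a geometric statement about the two manifolds. Since by hypothesis the assumptions of Lemma~\ref{lem:zero-from-degree} are satisfied, for every $\varepsilon\in(0,\epsilon]$ there exists $x=x(\varepsilon)\in U$ such that $y(\varepsilon,x)=0$.

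The second step is to convert this zero into an actual intersection point. By the defining relations (\ref{eq:implicit-cond-fixed-pt}), the two points
$$
p^u:=w^u(\varepsilon,\mathrm{u}(\varepsilon,x))\in W^u_{\Lambda_\varepsilon},\qquad p^s:=w^s(\varepsilon,\mathrm{s}(\varepsilon,x))\in W^s_{\Lambda_\varepsilon}
$$
both satisfy $\pi_x p^u=\pi_x p^s=x$. The definition (\ref{eq:f-for-fixed-pt}) together with $y(\varepsilon,x)=0$ gives $\pi_y p^u=\pi_y p^s$, so $p^u=p^s$, and this common point lies in $W^u_{\Lambda_\varepsilon}\cap W^s_{\Lambda_\varepsilon}$. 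This is all that the theorem asks for.

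There is no genuine obstacle inside this theorem: once the existence of a zero of $y(\varepsilon,\cdot)$ is granted, the conclusion is a one-line bookkeeping argument enabled by the fact that we have specifically reparameterised both manifolds as graphs over the same $x$-variable (using conditions \textbf{A}, \textbf{B} and (\ref{eq:common-projection-1}), which secure the existence and smoothness of $\mathrm{u}$ and $\mathrm{s}$ via an implicit function argument). The real difficulty is displaced to \emph{outside} the statement, namely to the verification that $\deg(y(\varepsilon,\cdot),U,0)\neq 0$. In practice this will be carried out via the more concrete Lemma~\ref{lem:y-zero-practical}, with the splitting $(y_1,y_2)$ dictated by how $W^u_{\Lambda_0}$ and $W^s_{\Lambda_0}$ meet prior to perturbation (coinciding, transversal, or partially coinciding), and the required bounds on $\partial y/\partial x$, $\partial y/\partial\varepsilon$ and $\partial^2 y_2/\partial\varepsilon\partial x$ are precisely what is deferred to Section~\ref{sec:verif} and the rigorous numerics described in the introduction.
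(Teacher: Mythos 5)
Your proof is correct and is essentially identical to the paper's: apply Lemma~\ref{lem:zero-from-degree} to get a zero $x(\varepsilon)$ of $y(\varepsilon,\cdot)$, then use (\ref{eq:implicit-cond-fixed-pt}) together with the definition (\ref{eq:f-for-fixed-pt}) to see that the corresponding points on $W^u_{\Lambda_\varepsilon}$ and $W^s_{\Lambda_\varepsilon}$ agree on both the $x$- and $y$-coordinates, hence coincide.
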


\begin{proof}
By Lemma \ref{lem:zero-from-degree}, for any $\varepsilon\in(0,\epsilon]$
there exists an $x\left(  \varepsilon\right)  $ such that $y(\varepsilon
,x\left(  \varepsilon\right)  )=0$. By the definition of $y$ in
(\ref{eq:f-for-fixed-pt}), if $y(\varepsilon,x\left(  \varepsilon\right)  )=0$
then 
\[\pi_{y}w^{u}\left(  \varepsilon,\mathrm{u}(\varepsilon,x\left(
\varepsilon\right)  )\right)  =\pi_{y}w^{s}\left(  \varepsilon,\mathrm{s}%
(\varepsilon,x\left(  \varepsilon\right)  )\right).\] From
(\ref{eq:implicit-cond-fixed-pt}) we also have%
\[
\pi_{x}w^{u}\left(  \varepsilon,\mathrm{u}(\varepsilon,x\left(  \varepsilon
\right)  )\right)  =x\left(  \varepsilon\right)  =\pi_{x}w^{s}\left(
\varepsilon,\mathrm{s}(\varepsilon,x\left(  \varepsilon\right)  )\right)  ,
\]
so we have an intersection point $w^{u}\left(  \varepsilon,\mathrm{u}%
(\varepsilon,x\left(  \varepsilon\right)  )\right)  =w^{s}\left(
\varepsilon,\mathrm{s}(\varepsilon,x\left(  \varepsilon\right)  )\right)  $,
as required.
\end{proof}

We note that in section \ref{sec:zeros}, in (\ref{eq:y1-y2-setting}), we have
two coordinates $y_{1}$ and $y_{2}$ for our $y$ defined in
(\ref{eq:f-for-fixed-pt}). In section \ref{sec:zeros}, in (\ref{eq:y2-zero}),
we assume that and on $y_{2}$ we have degenerate zeros. Depending on how the
manifolds $W_{\Lambda_{0}}^{u},$ $W_{\Lambda_{0}}^{s}$ intersect prior to
perturbation we will need to choose appropriate $k_{1}$ and $k_{2}$. For
instance, if the two manifolds coincide before the perturbation i.e.
$W_{\Lambda_{0}}^{u}=W_{\Lambda_{0}}^{s}$, then we should take $k_{1}=0$ and
$k_{2}=k.$ If on the other hand they intersect along a $l$-dimensional
manifold $W_{\Lambda_{0}}^{u}\cap W_{\Lambda_{0}}^{s}$, then we should take
$k_{1}=k-l$ and $k_{2}=l$. In such case we also need to carefully choose
coordinates $y_{1}$ and $y_{2}$ so that condition (\ref{eq:y2-zero}) is fulfilled.

To verify assumptions of Lemma \ref{lem:zero-from-degree} we can apply Lemma
\ref{lem:y-zero-practical}. We see that for this we need to be able to compute
first and second derivatives of $y\left(  \varepsilon,x\right)  $. This will
involve the computation of first and second derivatives of $\mathrm{u}%
(\varepsilon,x)$ and $\mathrm{s}(\varepsilon,x)$. These derivatives can be
computed implicitly as follows. By introducing notations
\begin{align}
g_{1},g_{2}  &  :\mathbb{R}\times\mathbb{R}^{k}\times\mathbb{R}^{k}%
\rightarrow\mathbb{R}^{k},\nonumber\\
g_{1}\left(  \varepsilon,x,\mathrm{u}\right)   &  :=\pi_{x}w^{u}\left(
\varepsilon,\mathrm{u}\right)  -x,\label{eq:dimensions-fixed-pt}\\
g_{2}\left(  \varepsilon,x,\mathrm{s}\right)   &  :=\pi_{x}w^{u}\left(
\varepsilon,\mathrm{s}\right)  -x,\nonumber
\end{align}
we see that the two equalities in (\ref{eq:implicit-cond-fixed-pt}) are
\begin{align}
g_{1}\left(  \varepsilon,x,\mathrm{u}\left(  \varepsilon,x\right)  \right)
&  =0,\label{eq:g1-zero}\\
g_{2}\left(  \varepsilon,x,\mathrm{s}\left(  \varepsilon,x\right)  \right)
&  =0. \label{eq:g2-zero}%
\end{align}
We see that (\ref{eq:g1-zero}) and (\ref{eq:g2-zero}) can be used for implicit
computations of the derivatives of $\mathrm{u}\left(  \varepsilon,x\right)  $
and $\mathrm{s}\left(  \varepsilon,x\right)  $, respectively. We discuss this
issue in section \ref{sec:implicit}, writing out all the details.

When we apply Lemma \ref{lem:y-zero-practical}, then we also obtain that the
intersection is transversal:

\begin{theorem}
Assume that $\Lambda_{\varepsilon}=\left\{  \lambda_{\varepsilon}\right\}  $
is a family of hyperbolic fixed points, that $\mathrm{u}(\varepsilon,x)$ and
$\mathrm{s}(\varepsilon,x)$, are functions for which
(\ref{eq:implicit-cond-fixed-pt}) holds true, and let $y\left(  \varepsilon
,x\right)  $ be defined by (\ref{eq:f-for-fixed-pt}). If assumptions of Lemma
\ref{lem:y-zero-practical} are satisfied, then for any $\varepsilon
\in(0,\epsilon]$ the manifolds $W_{\Lambda_{\varepsilon}}^{u}$, $W_{\Lambda
_{\varepsilon}}^{s}$ intersect transversally.
\end{theorem}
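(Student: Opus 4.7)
The plan is to reduce the claim directly to Lemma~\ref{lem:trans}, which already delivers both uniqueness and transversality whenever the target function is written as a difference of two graphs satisfying the hypotheses of Lemma~\ref{lem:y-zero-practical}. Setting
\[
w(\varepsilon,x) := \pi_y w^u(\varepsilon, \mathrm{u}(\varepsilon,x)), \qquad v(\varepsilon,x) := \pi_y w^s(\varepsilon, \mathrm{s}(\varepsilon,x)),
\]
I immediately have $y(\varepsilon,x)=w(\varepsilon,x)-v(\varepsilon,x)$, which is exactly the form (\ref{eq:y=diff}). By the implicit identities (\ref{eq:implicit-cond-fixed-pt}), the graphs
\[
W_\varepsilon = \{(x, w(\varepsilon,x)) : x\in U\}, \qquad V_\varepsilon = \{(x, v(\varepsilon,x)) : x\in U\}
\]
coincide with the local pieces of $W^u_{\Lambda_\varepsilon}$ and $W^s_{\Lambda_\varepsilon}$ reparameterised over the $x$-coordinate.

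With this identification, the hypotheses of Lemma~\ref{lem:y-zero-practical} on $y$ (which are exactly what we are assuming) coincide with the hypotheses of Lemma~\ref{lem:trans} for $W_\varepsilon$ and $V_\varepsilon$. Applying the latter, I would obtain for every $\varepsilon \in (0,\epsilon]$ a unique point at which the graphs meet, and moreover that every matrix in $\left[\frac{\partial y}{\partial x}(E,U)\right]$ is an isomorphism; in particular $\frac{\partial y}{\partial x}(\varepsilon,x)$ at the intersection is an isomorphism, which encodes transversality of the two graphs inside $\mathbb{R}^k \times \mathbb{R}^k$.

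The one step that is not automatic, and which I would expect to be the main obstacle, is passing from transversality of the graph representations $W_\varepsilon$, $V_\varepsilon$ back to transversality of the original invariant manifolds $W^u_{\Lambda_\varepsilon}$, $W^s_{\Lambda_\varepsilon}$. Here assumptions \textbf{A} and \textbf{B} are essential: they guarantee that $\mathrm{u}(\varepsilon,\cdot)$ and $\mathrm{s}(\varepsilon,\cdot)$ are local diffeomorphisms on $U$, so the maps $x \mapsto w^u(\varepsilon,\mathrm{u}(\varepsilon,x))$ and $x \mapsto w^s(\varepsilon,\mathrm{s}(\varepsilon,x))$ are smooth local parameterisations of $W^u_{\Lambda_\varepsilon}$ and $W^s_{\Lambda_\varepsilon}$ near the intersection point. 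Their images and tangent spaces therefore agree with those of the graphs $W_\varepsilon$ and $V_\varepsilon$ at the intersection point, so transversality of the graphs is precisely transversality of the invariant manifolds, completing the proof.
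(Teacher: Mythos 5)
Your proposal is correct and follows essentially the same route as the paper, which cites Theorem~\ref{th:fixed-pt} and Lemmas~\ref{lem:y-zero-practical}, \ref{lem:trans} without elaboration; you simply fill in the details. One small remark: the step you flag as the ``main obstacle'' (passing from the graphs $W_\varepsilon$, $V_\varepsilon$ to the invariant manifolds) is actually immediate, because the identity $\pi_x w^u(\varepsilon,\mathrm{u}(\varepsilon,x))=x$ from (\ref{eq:implicit-cond-fixed-pt}) forces $w^u(\varepsilon,\mathrm{u}(\varepsilon,x))=(x,w(\varepsilon,x))$, so the graph $W_\varepsilon$ \emph{is} the local piece of $W^u_{\Lambda_\varepsilon}$ (and likewise for $V_\varepsilon$ and $W^s_{\Lambda_\varepsilon}$); transversality therefore transfers tautologically rather than needing a separate argument via assumptions \textbf{A}, \textbf{B}.
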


\begin{proof}
The result follows directly from Theorem \ref{th:fixed-pt} combined with
Lemmas~ \ref{lem:y-zero-practical}, \ref{lem:trans}.
\end{proof}

\subsection{NHIMs with stable/unstable manifolds of equal dimension}

In this section we consider $\Lambda_{\varepsilon}$, which are normally
hyperbolic invariant manifolds of dimension $c\neq0$. We assume that
$W_{\Lambda_{\varepsilon}}^{cu}$ and $W_{\Lambda_{\varepsilon}}^{cs}$ are
$C^{2},$ $k+c$ dimensional manifolds. The total dimension of our space is
$n=c+u+s=c+2k$.

As in section \ref{sec:fixed-pt} we start with a motivating example, in which
$W_{\Lambda_{\varepsilon}}^{cu}$ and $W_{\Lambda_{\varepsilon}}^{cs}$ are
graphs over some coordinates.

\begin{example}
Assume that we have coordinates $\left(  x,y,z\right)  \in\mathbb{R}^{k}%
\times\mathbb{R}^{k}\times\mathbb{R}^{c}$. We can think of $z$ as the center
coordinate associated with the manifolds $\Lambda_{\varepsilon}$, of $x$ as
the stable/unstable coordinate associated with the stable/unstable fibres of
the manifolds. The $y$ will be the coordinate along which we measure the
splitting. Assume that
\begin{align*}
\pi_{\left(  x,z\right)  }w^{cu}\left(  \varepsilon,x,z\right)   &  =\left(
x,z\right)  ,\\
\pi_{\left(  x,z\right)  }w^{cs}\left(  \varepsilon,x,z\right)   &  =\left(
x,z\right)  .
\end{align*}
We can fix some $z^{\ast}\in\mathbb{R}^{c}$ and define%
\[
y\left(  \varepsilon,x\right)  :=\pi_{y}w^{cu}\left(  \varepsilon,x,z^{\ast
}\right)  -\pi_{y}w^{cs}\left(  \varepsilon,x,z^{\ast}\right)  .
\]
Lemma \ref{lem:zero-from-degree} can now be employed to detect intersections
of $W_{\Lambda_{\varepsilon}}^{cu}$ with $W_{\Lambda_{\varepsilon}}^{cs}$ for
$\varepsilon\in(0,\epsilon].$
\end{example}

The setting in which the center-stable and center-unstable manifolds away from
$\Lambda_{\varepsilon}$ would be graphs over the same coordinates is rare. One
needs some extra work to achieve this.

Assume that there are local coordinates $\left(  x,y,z\right)  \in
\mathbb{R}^{k}\times\mathbb{R}^{k}\times\mathbb{R}^{c}$ such that
$W_{\Lambda_{\varepsilon}}^{cs}$ and $W_{\Lambda_{\varepsilon}}^{cu}$ project
locally in 1-1 way onto coordinates $(x,z)$ satisfying the following conditions:

\begin{description}
\item[A] $\left[  \frac{\partial\pi_{x,z}w^{cu}}{\partial\mathrm{cu}}(E\times
U^{\prime})\right]  $ is an isomorphism, where $U^{\prime}\subset
\mathbb{R}^{k}\times\mathbb{R}^{c}$ is some bounded open set, and $E=[0,\epsilon],$

\item[B] $\left[  \frac{\partial\pi_{x,z}w^{cs}}{\partial\mathrm{cs}}(E\times
S^{\prime})\right]  $ is an isomorphism, where $S^{\prime}\subset
\mathbb{R}^{k}\times\mathbb{R}^{c}$ is some bounded open set.
\end{description}

Assume also that we have open sets $U\subset\mathbb{R}^{k}, V\subset
\mathbb{R}^{c}$, for which
\[
U\times V\subset\left(  \pi_{x,z}w^{cu}(E\times U^{\prime})\cap\pi_{x,z}%
w^{cs}(E\times S^{\prime})\right)  .
\]
Now thanks to conditions \textbf{A} and \textbf{B} we can represent
$W_{\Lambda_{\varepsilon}}^{cs}$ and $W_{\Lambda_{\varepsilon}}^{cu}$ as
graphs of functions of $(\varepsilon,x,z)$. We therefore have $C^{2}$
functions
\begin{align*}
\mathrm{cu}  &  :\mathbb{R}\times\mathbb{R}^{k}\times\mathbb{R}^{c}\supset
E\times U\times V\rightarrow\mathbb{R}^{k}\times\mathbb{R}^{c},\\
\mathrm{cs}  &  :\mathbb{R}\times\mathbb{R}^{k}\times\mathbb{R}^{c}\supset
E\times U\times V\rightarrow\mathbb{R}^{k}\times\mathbb{R}^{c},
\end{align*}
for which for all $\left(  x,z\right)  \in U\times V$
\begin{equation}%
\begin{array}
[c]{l}%
\pi_{\left(  x,z\right)  }w^{cu}\left(  \varepsilon,\mathrm{cu}(\varepsilon
,x,z)\right)  =\left(  x,z\right)  ,\smallskip\\
\pi_{\left(  x,z\right)  }w^{cs}\left(  \varepsilon,\mathrm{cs}(\varepsilon
,x,z)\right)  =\left(  x,z\right)  .
\end{array}
\label{eq:implicit-lambda}%
\end{equation}

Note that locally the manifolds $W_{\Lambda_{\varepsilon}}^{cu}$,
$W_{\Lambda_{\varepsilon}}^{cs}$ have the expressions
\begin{align*}
\{(x,\pi_{y}w^{cu}(\varepsilon,\mathrm{cu}(\varepsilon,x,z),z)  &
:\varepsilon\in E,x\in U,z\in V\},\\
\{(x,\pi_{y}w^{cs}(\varepsilon,\mathrm{cs}(\varepsilon,x,z),z)  &
:\varepsilon\in E,x\in U,z\in V\},
\end{align*}
respectively.

Now we fix one $z^{\ast}\in V$ and define $y:\mathbb{R\times R}^{k}\supset
E\times U\rightarrow\mathbb{R}^{k}$ as
\begin{equation}
y\left(  \varepsilon,x\right)  :=\pi_{y}w^{cu}\left(  \varepsilon
,\mathrm{cu}(\varepsilon,x,z^{\ast})\right)  -\pi_{y}w^{cs}\left(
\varepsilon,\mathrm{cs}(\varepsilon,x,z^{\ast})\right)  . \label{eq:f-lambda}%
\end{equation}

\begin{theorem}
\label{th:nhim-same-dimension}Assume that functions $\mathrm{cu}%
(\varepsilon,x,z)$, $\mathrm{cs}(\varepsilon,x,z)$ satisfy
(\ref{eq:implicit-lambda}). If for fixed $z^{\ast}\in\mathbb{R}^{c}$ and for
$y\left(  \varepsilon,x\right)  $ defined by (\ref{eq:f-lambda}) the
assumptions of Lemma \ref{lem:zero-from-degree} are satisfied, then for any
$\varepsilon\in(0,\epsilon]$ the manifolds $W_{\Lambda_{\varepsilon}}^{cu}$,
$W_{\Lambda_{\varepsilon}}^{cs}$ intersect on the section $\{z=z^{\ast}\}$
\end{theorem}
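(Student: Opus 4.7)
The plan is to mirror the argument used for Theorem~\ref{th:fixed-pt}, with the only modification coming from the presence of the central coordinate $z$, which we handle by fixing the value $z=z^{\ast}$ throughout. The essential reason the same strategy works is that the whole point of introducing the reparameterisations $\mathrm{cu}(\varepsilon,x,z)$ and $\mathrm{cs}(\varepsilon,x,z)$ via the implicit relations (\ref{eq:implicit-lambda}) was precisely to express $W_{\Lambda_{\varepsilon}}^{cu}$ and $W_{\Lambda_{\varepsilon}}^{cs}$ locally as graphs over the common $(x,z)$-coordinates, so once we freeze $z=z^{\ast}$ the problem reduces to finding a single $x$ on which the $y$-components of the two graphs agree.

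First I would invoke Lemma~\ref{lem:zero-from-degree}, whose hypotheses are given to us, to produce for each $\varepsilon\in(0,\epsilon]$ a point $x(\varepsilon)\in U$ such that $y(\varepsilon,x(\varepsilon))=0$, where $y$ is the function defined in (\ref{eq:f-lambda}). Unpacking this definition, the vanishing of $y(\varepsilon,x(\varepsilon))$ is exactly the statement
\[
\pi_{y}w^{cu}\!\left(\varepsilon,\mathrm{cu}(\varepsilon,x(\varepsilon),z^{\ast})\right)=\pi_{y}w^{cs}\!\left(\varepsilon,\mathrm{cs}(\varepsilon,x(\varepsilon),z^{\ast})\right).
\]
Second, from the defining property (\ref{eq:implicit-lambda}) of the reparameterisations we automatically have
\[
\pi_{(x,z)}w^{cu}\!\left(\varepsilon,\mathrm{cu}(\varepsilon,x(\varepsilon),z^{\ast})\right)=(x(\varepsilon),z^{\ast})=\pi_{(x,z)}w^{cs}\!\left(\varepsilon,\mathrm{cs}(\varepsilon,x(\varepsilon),z^{\ast})\right).
\]
Combining these two equalities coordinate by coordinate yields $w^{cu}(\varepsilon,\mathrm{cu}(\varepsilon,x(\varepsilon),z^{\ast}))=w^{cs}(\varepsilon,\mathrm{cs}(\varepsilon,x(\varepsilon),z^{\ast}))$, and this common point lies in $W_{\Lambda_{\varepsilon}}^{cu}\cap W_{\Lambda_{\varepsilon}}^{cs}$. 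The fact that its $z$-coordinate equals $z^{\ast}$ is immediate from the $(x,z)$-projection above, giving the desired intersection on the section $\{z=z^{\ast}\}$.

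There is essentially no obstacle: the construction of $y$ has already absorbed the hard work. The only thing one should verify carefully is that $\mathrm{cu}$ and $\mathrm{cs}$ really exist as $C^{2}$ functions on $E\times U\times V$, which follows from the implicit function theorem applied under the isomorphism assumptions \textbf{A} and \textbf{B}; once this is granted, the theorem reduces to an entirely formal chasing of the definitions, exactly as in the fixed-point case.
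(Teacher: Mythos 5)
Your proof is correct and follows the same route the paper intends: the paper itself only says the proof ``follows from mirror arguments to the proof of Theorem~\ref{th:fixed-pt},'' and your argument is precisely that mirror, with the $z$-coordinate frozen at $z^{\ast}$ and (\ref{eq:implicit-lambda}) supplying the matching of the $(x,z)$-projections. The closing remark about the existence and smoothness of $\mathrm{cu},\mathrm{cs}$ is not actually needed as part of this proof, since the theorem's hypotheses already presuppose that these functions exist and satisfy (\ref{eq:implicit-lambda}), but it does not detract from the argument.
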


\begin{proof}
The proof follows from mirror arguments to the proof of Theorem
\ref{th:fixed-pt}.
\end{proof}

Note that in section \ref{sec:zeros} we have a setting in which $y(\varepsilon
,x)=\left(  y_{1}(\varepsilon,x),y_{2}\left(  \varepsilon,x\right)  \right)  $
and $y_{2}\left(  0,x\right)  =0$ for all $x\in U$. The choice of the dimensions $k_{1},k_{2}$
of $y_{1}$ and $y_{2}$, respectively, depends on the initial setting of the
unperturbed manifolds. For instance, if the manifolds coincide, then we should
take $k_{1}=0$ and $k_{2}=k$. If the intersection $W_{\Lambda_{0}}^{cu}\cap
W_{\Lambda_{0}}^{cs}$ is a $c+l$ dimensional manifold, then we should choose
$k_{1}=k-l$ and $k_{2}=l$. We need also to ensure that the coordinates are
chosen accordingly so that $y_{2}\left(  0,x\right)  =0$.

The assumption of Lemma \ref{lem:zero-from-degree} for the $y(\varepsilon,x)$
defined in (\ref{eq:f-lambda}) can be verified using Lemma
\ref{lem:y-zero-practical}. In order to do so we need to compute the first and
second derivatives of $y\left(  \varepsilon,x\right)  $, which in turn
requires the computation of the first and second derivatives of $\mathrm{cu}%
(\varepsilon,x,z)$ and $\mathrm{cs}(\varepsilon,x,z)$. Below we discuss how
these can be computed implicitly. If we introduce the notations%
\[
g_{1},g_{2}:\mathbb{R}\times\mathbb{R}^{k+c}\times\mathbb{R}^{k+c}%
\rightarrow\mathbb{R}^{k+c},
\]%
\begin{align*}
g_{1}\left(  \varepsilon,\left(  x,z\right)  ,\mathrm{cu}\right)   &
=\pi_{\left(  x,z\right)  }w^{cu}\left(  \varepsilon,\mathrm{cu}\right)
-\left(  x,z\right)  ,\\
g_{2}\left(  \varepsilon,\left(  x,z\right)  ,\mathrm{cs}\right)   &
=\pi_{\left(  x,z\right)  }w^{cs}\left(  \varepsilon,\mathrm{cu}\right)
-\left(  x,z\right)  ,
\end{align*}
then the system of equations (\ref{eq:implicit-lambda}) is
\begin{align}
g_{1}\left(  \varepsilon,\left(  x,z\right)  ,\mathrm{cu}\left(
\varepsilon,x,z\right)  \right)   &  =0,\label{eq:implicit-g1}\\
g_{2}\left(  \varepsilon,\left(  x,z\right)  ,\mathrm{cs}\left(
\varepsilon,x,z\right)  \right)   &  =0. \label{eq:implicit-g2}%
\end{align}

The (\ref{eq:implicit-g1}--\ref{eq:implicit-g2}) can be used for the implicit
computations of the first and second derivatives of $\mathrm{cu}\left(
\varepsilon,x,z\right)  $ and $\mathrm{cs}\left(  \varepsilon,x,z\right)  $,
respectively. See section \ref{sec:implicit}.

When using Lemma \ref{lem:y-zero-practical} for establishing intersection of
the manifolds, we also obtain transversality (compare with Lemma~\ref{lem:trans}):

\begin{theorem}
\label{th:transversality-2}Assume that functions $\mathrm{cu}(\varepsilon
,x,z)$, $\mathrm{cs}(\varepsilon,x,z)$ satisfy (\ref{eq:implicit-lambda}). If
for $z^{\ast}\in\mathbb{R}^{c}$ for $y\left(  \varepsilon,x\right)  $ defined
by (\ref{eq:f-lambda}) the assumptions of Lemma \ref{lem:y-zero-practical} are
satisfied, then for any $\varepsilon\in(0,\epsilon]$ the manifolds
$W_{\Lambda_{\varepsilon}}^{cu}$, $W_{\Lambda_{\varepsilon}}^{cs}$ intersect transversally.
\end{theorem}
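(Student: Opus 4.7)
The plan is to combine Theorem \ref{th:nhim-same-dimension}, which already supplies the existence of an intersection point on the section $\{z=z^{\ast}\}$, with the transversality assertion of Lemma \ref{lem:trans}, and then to lift the slice-wise transversality to full transversality in $\mathbb{R}^{c+2k}$ via a direct computation of tangent spaces that exploits the graph parameterisations of $W_{\Lambda_{\varepsilon}}^{cu}$ and $W_{\Lambda_{\varepsilon}}^{cs}$.

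First I would set
\[
w(\varepsilon,x):=\pi_{y}w^{cu}(\varepsilon,\mathrm{cu}(\varepsilon,x,z^{\ast})),\qquad v(\varepsilon,x):=\pi_{y}w^{cs}(\varepsilon,\mathrm{cs}(\varepsilon,x,z^{\ast})),
\]
so that the function $y$ from (\ref{eq:f-lambda}) coincides with $w-v$. Lemma \ref{lem:trans} then applies directly and delivers two facts: for every $\varepsilon\in(0,\epsilon]$ the equation $y(\varepsilon,\cdot)=0$ has a unique solution $x_{\varepsilon}\in U$, and every matrix in $\left[\frac{\partial y}{\partial x}(E,U)\right]$ is invertible. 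Introducing the shorthand $F^{cu}(\varepsilon,x,z):=\pi_{y}w^{cu}(\varepsilon,\mathrm{cu}(\varepsilon,x,z))$ and analogously $F^{cs}$, this invertibility is exactly the invertibility of $D_{x}F^{cu}-D_{x}F^{cs}$ at the intersection point.

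At the intersection point $p_{\varepsilon}=(x_{\varepsilon},y_{\varepsilon},z^{\ast})$ the graph form of the manifolds gives
\begin{align*}
T_{p_{\varepsilon}}W_{\Lambda_{\varepsilon}}^{cu} &= \{(a,\,D_{x}F^{cu}\,a+D_{z}F^{cu}\,c,\,c):a\in\mathbb{R}^{k},\,c\in\mathbb{R}^{c}\},\\
T_{p_{\varepsilon}}W_{\Lambda_{\varepsilon}}^{cs} &= \{(b,\,D_{x}F^{cs}\,b+D_{z}F^{cs}\,d,\,d):b\in\mathbb{R}^{k},\,d\in\mathbb{R}^{c}\},
\end{align*}
where the derivatives are evaluated at $(\varepsilon,x_{\varepsilon},z^{\ast})$. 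To verify that the sum of these subspaces equals $\mathbb{R}^{k}\times\mathbb{R}^{k}\times\mathbb{R}^{c}$, I would take an arbitrary target vector $(X,Y,Z)$, set $c=0$, $d=Z$ and $b=X-a$ in order to match the $x$ and $z$ components automatically, and reduce the middle component equation to
\[
(D_{x}F^{cu}-D_{x}F^{cs})\,a=Y-D_{x}F^{cs}\,X-D_{z}F^{cs}\,Z,
\]
which is solvable for $a$ by the invertibility just established.

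The genuinely delicate point is not analytic but bookkeeping: one has to keep track of how the central direction $z$ is carried trivially by both graphs, so it contributes no obstruction, while the stable/unstable $x$-direction carries all the splitting, and it is precisely there that the invertibility supplied by Lemma \ref{lem:trans} intervenes. Once the tangent spaces are laid out this way, no further analytic input is required beyond what was already used to obtain the intersection in Theorem \ref{th:nhim-same-dimension}.
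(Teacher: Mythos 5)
Your proof is correct and is in substance the same as the paper's: both rest on (i) the invertibility of $\frac{\partial y}{\partial x}$ supplied by Lemma~\ref{lem:trans} to span the $y$-direction via $x$, and (ii) the graph structure of the manifolds over $z$ (a consequence of (\ref{eq:implicit-lambda})) to span the $z$-direction. The only difference is presentational: the paper first invokes the transversality conclusion of Lemma~\ref{lem:trans} on the slice $\{z=z^{\ast}\}$ and then adjoins the $z$-span of $T_{p}W_{\Lambda_{\varepsilon}}^{cu}$, whereas you write out both tangent spaces as graphs and solve the resulting linear system in one shot (taking the $z$-span from $W^{cs}$ rather than $W^{cu}$, which is an immaterial symmetric choice).
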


\begin{proof}
Let $\varepsilon\in(0,\epsilon]$ be fixed. By (\ref{eq:implicit-lambda}),
(\ref{eq:f-lambda}) and Lemma \ref{lem:y-zero-practical} combined with Theorem
\ref{th:nhim-same-dimension}, the manifolds $W_{\Lambda_{\varepsilon}}^{cu}$,
$W_{\Lambda_{\varepsilon}}^{cs}$ intersect at a point $p=p\left(
\varepsilon,z^{\ast}\right)  =w^{cu}\left(  \varepsilon,\mathrm{cu}%
(\varepsilon,x,z^{\ast})\right)  =w^{cs}\left(  \varepsilon,\mathrm{cs}%
(\varepsilon,x,z^{\ast})\right)  $, where $\left(  x,z^{\ast}\right)
=\pi_{\left(  x,z\right)  }p$. The coordinates in which we check
transversality are $\left(  x,y,z\right)  $. By (\ref{eq:implicit-lambda}) and
Lemma \ref{lem:trans} we have transversality in the $x,y$ coordinates:%
\begin{equation}
\left(  T_{p}W_{\Lambda_{\varepsilon}}^{cu}+T_{p}W_{\Lambda_{\varepsilon}%
}^{cs}\right)  \cap T_{p}\left\{  z=z^{\ast}\right\}  =\mathbb{R}^{2k}%
\times\{0\}^{c}. \label{eq:transversal-x-y}%
\end{equation}
Let $v\in\mathbb{R}^{c}$ be any given vector. By (\ref{eq:implicit-lambda}) it
follows that%
\begin{equation}
\frac{d}{ds}w^{cu}\left(  \varepsilon,\mathrm{cu}(\varepsilon,\pi_{x}%
p,z^{\ast}+sv)\right)  |_{s=0}=\left(  0,w,v\right)  \in T_{p}W_{\Lambda
_{\varepsilon}}^{cu}, \label{eq:z-span}%
\end{equation}
where
\[
w=\frac{d}{ds}\pi_{y}w^{cu}\left(  \varepsilon,\mathrm{cu}(\varepsilon,\pi
_{x}p,z^{\ast}+sv)\right)  |_{s=0}.
\]
Since we can take any $v\in\mathbb{R}^{c}$, from (\ref{eq:z-span}) we see that
$T_{p}W_{\Lambda_{\varepsilon}}^{cu}$ spans the direction of the coordinate
$z$. This combined with (\ref{eq:transversal-x-y}) gives%
\[
T_{p}W_{\Lambda_{\varepsilon}}^{cu}+T_{p}W_{\Lambda_{\varepsilon}}%
^{cs}=\mathbb{R}^{2k+c},
\]
meaning that $W_{\Lambda_{\varepsilon}}^{cu}$ and $W_{\Lambda_{\varepsilon}%
}^{cs}$ intersect transversally at $p$, as required.
\end{proof}

\subsection{Different dimensions of stable/unstable manifolds}

In this section we consider the case when $W_{\Lambda_{\varepsilon}}^{cu}$ and
$W_{\Lambda_{\varepsilon}}^{cs}$ have different dimensions. Here we assume
that $u<s$. (If we have opposite inequality, then we can swap the roles of the
stable/unstable manifolds in below setup.) As in previous sections, we start
with an example in which we have good coordinates for both $W_{\Lambda
_{\varepsilon}}^{cu}$ and $W_{\Lambda_{\varepsilon}}^{cs}$.

\begin{example}
Assume that we have coordinates
\[
\left(  x,y,v,z\right)  \in\mathbb{R}^{u}\times\mathbb{R}^{u}\times
\mathbb{R}^{s-u}\times\mathbb{R}^{c}=\mathbb{R}^{n}.
\]
We assume that $W_{\Lambda_{\varepsilon}}^{cu}$ is parameterised by $\left(
\varepsilon,x,z\right)  $ and that $W_{\Lambda_{\varepsilon}}^{cs}$ is
parameterised by $\left(  \varepsilon,x,v,z\right)  $. We assume that they are
$C^{2}$ graphs over these coordinates, meaning that%
\begin{align*}
\pi_{\left(  x,z\right)  }w^{cu}\left(  \varepsilon,x,z\right)   &  =\left(
x,z\right)  ,\\
\pi_{\left(  x,v,z\right)  }w^{cs}\left(  \varepsilon,x,v,z\right)   &
=\left(  x,v,z\right)  .
\end{align*}
Let $z^{\ast}\in\mathbb{R}^{c}$ be fixed and consider%
\[
y\left(  \varepsilon,x\right)  =\pi_{y}w^{cu}\left(  \varepsilon,x,z^{\ast
}\right)  -\pi_{y}w^{cs}\left(  \varepsilon,x,\pi_{v}w^{cu}\left(
\varepsilon,x,z^{\ast}\right)  ,z^{\ast}\right)  .
\]
Lemma \ref{lem:zero-from-degree} can now be employed to detect intersections
of $W_{\Lambda_{\varepsilon}}^{cu}$ with $W_{\Lambda_{\varepsilon}}^{cs}$ for
$\varepsilon\in(0,\epsilon].$
\end{example}

Once again, the setting in which the center-stable and center-unstable
manifolds away from $\Lambda_{\varepsilon}$ would be graphs over the same
coordinates is rare. We proceed similarly as in the previous sections.

We assume that there are local coordinates $\left(  x,y,v,z\right)
\in\mathbb{R}^{u}\times\mathbb{R}^{u}\times\mathbb{R}^{s-u}\times
\mathbb{R}^{c}$ such that $W_{\Lambda_{\varepsilon}}^{cu}$ and $W_{\Lambda
_{\varepsilon}}^{cs}$ project locally in 1-1 way onto coordinates $(x,z)$ and
$\left(  x,v,z\right)  $, respectively. This means that the following
conditions are satisfied:

\begin{description}
\item[A] $\left[  \frac{\partial\pi_{x,z}w^{cu}}{\partial\mathrm{cu}}(E\times
U^{\prime})\right]  $ is an isomorphism, where $U^{\prime}\subset
\mathbb{R}^{u}\times\mathbb{R}^{c}$ is some bounded open set, and $E=[0,\epsilon]$,

\item[B] $\left[  \frac{\partial\pi_{x,v,z}w^{cs}}{\partial\mathrm{cs}%
}(E\times S^{\prime})\right]  $ is an isomorphism, where $S^{\prime}%
\subset\mathbb{R}^{s}\times\mathbb{R}^{c}=\mathbb{R}^{u}\times\mathbb{R}%
^{s-u}\times\mathbb{R}^{c}$ is some bounded open set.
\end{description}

We also assume also that we have open sets $U \subset\mathbb{R}^{u}, V
\subset\mathbb{R}^{s-u}, K \subset\mathbb{R}^{c}$, for which
\begin{align*}
U\times K  &  \subset\pi_{x,z}w^{cu}(E\times U^{\prime}),\\
U\times V\times K  &  \subset\pi_{x,v,z}w^{cs}(E\times S^{\prime}),
\end{align*}
Now, thanks to conditions \textbf{A} and \textbf{B,} we can represent
$W_{\Lambda_{\varepsilon}}^{cs}$ and $W_{\Lambda_{\varepsilon}}^{cu}$ as
graphs of functions of $(\varepsilon,x,z)$. We therefore have $C^{2}$
functions
\begin{align*}
\mathrm{cu}  &  :\mathbb{R}\times\mathbb{R}^{u}\times\mathbb{R}^{c}\supset
E\times U\times K\rightarrow\mathbb{R}^{u}\times\mathbb{R}^{c},\\
\mathrm{cs}  &  :\mathbb{R}\times\mathbb{R}^{u}\times\mathbb{R}^{s-u}%
\times\mathbb{R}^{c}\supset E\times U\times V\times K\rightarrow\mathbb{R}%
^{u}\times\mathbb{R}^{c},
\end{align*}
for which
\begin{equation}
\pi_{\left(  x,z\right)  }w^{cu}\left(  \varepsilon,\mathrm{cu}\left(
\varepsilon,x,z\right)  \right)  =\left(  x,z\right)  ,
\label{eq:impl-different-dim-1}%
\end{equation}
and%
\begin{equation}
\pi_{\left(  x,v,z\right)  }w^{cs}\left(  \varepsilon,\mathrm{cs}\left(
\varepsilon,x,v,z\right)  \right)  =\left(  x,v,z\right)  .
\label{eq:impl-different-dim-2}%
\end{equation}

In the local coordinates, the manifolds $W_{\Lambda_{\varepsilon}}^{cs}$,
$W_{\Lambda_{\varepsilon}}^{cu}$ have the local expressions
\begin{align*}
&  \left\{  \left(  x,\pi_{\left(  y,v\right)  }w^{cu}\left(  \varepsilon
,\mathrm{cu}\left(  \varepsilon,x,z\right)  \right)  ,z\right)  :\varepsilon
\in E,x\in U,z\in K\right\}  ,\\
&  \left\{  \left(  x,\pi_{y}w^{cs}\left(  \varepsilon,\mathrm{cs}\left(
\varepsilon,x,v,z\right)  \right)  ,v,z\right)  :\varepsilon\in E,x\in U,v\in
V,z\in K\right\}  ,
\end{align*}
respectively.

We now fix $z^{\ast}\in\mathbb{R}^{c}$ and define $y:\mathbb{R\times R}%
^{u}\supset E\times U\rightarrow\mathbb{R}^{u}$ as%
\begin{equation}
y\left(  \varepsilon,x\right)  :=\pi_{y}w^{cu}\left(  \varepsilon
,\mathrm{cu}\left(  \varepsilon,x,z^{\ast}\right)  \right)  -\pi_{y}%
w^{cs}\left(  \varepsilon,\mathrm{cs}\left(  \varepsilon,x,\pi_{v}%
w^{cu}\left(  \varepsilon,\mathrm{cu}\left(  \varepsilon,x,z^{\ast}\right)
\right)  \right)  ,z^{\ast}\right)  . \label{eq:f-def-different-dim}%
\end{equation}

\begin{theorem}
\label{th:intersection-general}Assume that $u<s$. If $\mathrm{cs}\left(
\varepsilon,x,v,z\right)  ,$ $\mathrm{cu}\left(  \varepsilon,x,z\right)  $
satisfy (\ref{eq:impl-different-dim-1}--\ref{eq:impl-different-dim-2}) and for
$y\left(  \varepsilon,x\right)  $ defined in (\ref{eq:f-def-different-dim})
assumptions of Lemma \ref{lem:zero-from-degree} are satisfied, then for any
$\varepsilon\in(0,\epsilon]$ the manifolds $W_{\Lambda_{\varepsilon}}^{cu}$
and $W_{\Lambda_{\varepsilon}}^{cs}$ intersect.
\end{theorem}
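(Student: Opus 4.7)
The plan is to mirror the structure of the proof of Theorem \ref{th:fixed-pt}: apply Lemma \ref{lem:zero-from-degree} directly to the function $y(\varepsilon,\cdot)$ defined in (\ref{eq:f-def-different-dim}) to obtain a zero $x(\varepsilon)\in U$, and then build the intersection point explicitly by reading off the $v$-coordinate from the center-unstable parameterisation. The only real subtlety compared to the equal-dimension case is that the extra $v$-direction present in the center-stable manifold has to be filled in by the center-unstable side, and this is precisely what the nested composition in (\ref{eq:f-def-different-dim}) does.

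First I would fix $\varepsilon\in(0,\epsilon]$ and invoke Lemma \ref{lem:zero-from-degree} (whose hypotheses are assumed) to produce $x(\varepsilon)\in U$ with $y(\varepsilon,x(\varepsilon))=0$. Next, set
\[
p^{cu}:=w^{cu}\bigl(\varepsilon,\mathrm{cu}(\varepsilon,x(\varepsilon),z^{\ast})\bigr),\qquad v^{\ast}:=\pi_{v}p^{cu}.
\]
By (\ref{eq:impl-different-dim-1}), $\pi_{(x,z)}p^{cu}=(x(\varepsilon),z^{\ast})$, so in the local chart $(x,y,v,z)$ the point $p^{cu}$ has coordinates $(x(\varepsilon),\pi_{y}p^{cu},v^{\ast},z^{\ast})$, and by construction it lies on $W_{\Lambda_{\varepsilon}}^{cu}$.

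Then I would consider the point
\[
p^{cs}:=w^{cs}\bigl(\varepsilon,\mathrm{cs}(\varepsilon,x(\varepsilon),v^{\ast},z^{\ast})\bigr),
\]
which lies on $W_{\Lambda_{\varepsilon}}^{cs}$. By (\ref{eq:impl-different-dim-2}) its $(x,v,z)$-coordinates are exactly $(x(\varepsilon),v^{\ast},z^{\ast})$, matching those of $p^{cu}$. The identity $y(\varepsilon,x(\varepsilon))=0$ together with the definition (\ref{eq:f-def-different-dim}) gives
\[
\pi_{y}w^{cu}\bigl(\varepsilon,\mathrm{cu}(\varepsilon,x(\varepsilon),z^{\ast})\bigr)=\pi_{y}w^{cs}\bigl(\varepsilon,\mathrm{cs}(\varepsilon,x(\varepsilon),v^{\ast},z^{\ast})\bigr),
\]
so the $y$-coordinates also agree. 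Hence $p^{cu}=p^{cs}$ is a common point of the two manifolds.

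I do not foresee any real obstacle: the argument is bookkeeping based on the defining relations (\ref{eq:impl-different-dim-1})--(\ref{eq:impl-different-dim-2}) and the exact form of (\ref{eq:f-def-different-dim}). The one thing worth being careful about is that $v^{\ast}$ must lie in the domain $V$ on which $\mathrm{cs}$ is defined, but this is already built into the standing assumption that (\ref{eq:f-def-different-dim}) makes sense on $E\times U$; equivalently, $\pi_{v}w^{cu}(E\times\mathrm{cu}(E\times U\times\{z^{\ast}\}))\subset V$ is part of the working hypothesis on the coordinate chart. Given this, the proof concludes exactly as for Theorem \ref{th:fixed-pt}.
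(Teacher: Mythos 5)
Your proof is correct and takes essentially the same approach as the paper: the paper's own proof merely states that the argument mirrors Theorem \ref{th:fixed-pt}, and you have spelled out that mirroring explicitly, in particular the key bookkeeping step where the extra $v$-coordinate on the center-stable side is supplied by $v^{\ast}=\pi_{v}w^{cu}\bigl(\varepsilon,\mathrm{cu}(\varepsilon,x(\varepsilon),z^{\ast})\bigr)$ so that (\ref{eq:impl-different-dim-1})--(\ref{eq:impl-different-dim-2}) force the $(x,v,z)$-coordinates of $p^{cu}$ and $p^{cs}$ to coincide while $y(\varepsilon,x(\varepsilon))=0$ matches the $y$-coordinates. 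Your observation that $v^{\ast}\in V$ is needed is correct and is indeed implicit in the standing assumption that (\ref{eq:f-def-different-dim}) is well defined on $E\times U$.
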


\begin{proof}
The proof follows from mirror arguments to the proof of Theorem
\ref{th:fixed-pt}.
\end{proof}

Note that the domain and the range of $y(\varepsilon, \cdot)$ are in
$\mathbb{R}^{u}$. This means that we take $k=k_{1}+k_{2}=u$ in
(\ref{eq:y1-y2-setting}). Again, as in previous sections, the choice of the
dimensions $k_{1},k_{2}$ and of the coordinates $y_{1},y_{2}$ in
(\ref{eq:y1-y2-setting}) depends on how $W_{\Lambda_{0}}^{cu}$ and
$W_{\Lambda_{0}}^{cs}$ intersect.

Assumptions of Lemma \ref{lem:zero-from-degree} for $y(\varepsilon,x)$ defined
in (\ref{eq:f-def-different-dim}) can be verified using Lemma
\ref{lem:y-zero-practical}. In order to compute the first and second
derivatives of $\mathrm{cu}\left(  \varepsilon,x,z\right)  ,$ $\mathrm{cs}%
\left(  \varepsilon,x,v,z\right)  $ which are needed for the first and second
derivatives of $y\left(  \varepsilon,x\right)  $ we can do the following.
Consider
\begin{align*}
g_{1}  &  :\mathbb{R\times R}^{u+c}\mathbb{\times R}^{u+c}\rightarrow
\mathbb{R}^{u+c},\\
g_{1}\left(  \varepsilon,\left(  x,z\right)  ,\mathrm{cu}\right)   &
=\pi_{\left(  x,z\right)  }w^{cu}\left(  \varepsilon,\mathrm{cu}\right)
-\left(  x,z\right)  .
\end{align*}
The equation (\ref{eq:impl-different-dim-1}) in this notation is $g_{1}\left(
\varepsilon,\left(  x,z\right)  ,\mathrm{cu}\left(  \varepsilon,x,z\right)
\right)  =0$, which can be used for implicit computations of first and second
derivatives of $\mathrm{cu}\left(  \varepsilon,x,z\right)  $.

To compute first and second derivatives of $\mathrm{cs}\left(  \varepsilon
,x,v,z\right)  $ we can consider
\begin{align*}
g_{2}  &  :\mathbb{R\times R}^{s+c}\times\mathbb{R}^{s+c}\rightarrow
\mathbb{R}^{s+c}\\
g_{2}\left(  \varepsilon,\left(  x,v,z\right)  ,\mathrm{cs}\right)   &
=\pi_{\left(  x,v,z\right)  }w^{cs}\left(  \varepsilon,\mathrm{cs}\right)
-\left(  x,v,z\right)  .
\end{align*}
Then (\ref{eq:impl-different-dim-2}) is $g_{2}\left(  \varepsilon,\left(
x,v,z\right)  ,\mathrm{cs}\left(  \varepsilon,x,v,z\right)  \right)  =0$. This
can be used for implicit computations of first and second derivatives of
$\mathrm{cs}\left(  \varepsilon,x,v,z\right)  $. See section
\ref{sec:implicit}.

When we use Lemma \ref{lem:y-zero-practical}, we also obtain transversality:

\begin{theorem}
Assume that $u<s$. If $\mathrm{cs}\left(  \varepsilon,x,v,z\right)  ,$
$\mathrm{cu}\left(  \varepsilon,x,z\right)  $ satisfy
(\ref{eq:impl-different-dim-1}--\ref{eq:impl-different-dim-2}) and for
$y\left(  \varepsilon,x\right)  $ defined in (\ref{eq:f-def-different-dim})
assumptions of Lemma \ref{lem:y-zero-practical} are satisfied. Then for any
$\varepsilon\in(0,\epsilon]$ the manifolds $W_{\Lambda_{\varepsilon}}^{cu}$
and $W_{\Lambda_{\varepsilon}}^{cs}$ intersect transversally.
\end{theorem}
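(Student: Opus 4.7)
The plan is to mirror the argument used in the proof of Theorem~\ref{th:transversality-2}. Step one: Theorem~\ref{th:intersection-general} combined with Lemma~\ref{lem:y-zero-practical} yields, for every $\varepsilon\in(0,\epsilon]$, a common point $p\in W_{\Lambda_{\varepsilon}}^{cu}\cap W_{\Lambda_{\varepsilon}}^{cs}$ lying on the section $\{z=z^{\ast}\}$. Step two: extract transversality from the stronger conclusion of Lemma~\ref{lem:trans}, which under the hypotheses of Lemma~\ref{lem:y-zero-practical} guarantees that every matrix in $\bigl[\tfrac{\partial y}{\partial x}(E,U)\bigr]$ is an isomorphism; in particular $\tfrac{\partial y}{\partial x}(\varepsilon,x^{\ast})$ is a $u\times u$ isomorphism. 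Everything then reduces to showing that this single invertibility implies $T_{p}W_{\Lambda_{\varepsilon}}^{cu}+T_{p}W_{\Lambda_{\varepsilon}}^{cs}=\mathbb{R}^{n}$.

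To do this I will exploit the graph descriptions guaranteed by conditions~\textbf{A} and~\textbf{B}. In ambient coordinates $(x,y,v,z)\in\mathbb{R}^{u}\times\mathbb{R}^{u}\times\mathbb{R}^{s-u}\times\mathbb{R}^{c}$, the tangent space $T_{p}W_{\Lambda_{\varepsilon}}^{cu}$ is the graph of a linear map from the $(x,z)$-directions into the $(y,v)$-directions, whose blocks I denote $A_{yx},A_{yz},A_{vx},A_{vz}$; similarly $T_{p}W_{\Lambda_{\varepsilon}}^{cs}$ is the graph of a linear map from the $(x,v,z)$-directions into the $y$-direction, with blocks $B_{yx},B_{yv},B_{yz}$. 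Writing the equation $a+b=w$ with $a\in T_{p}W_{\Lambda_{\varepsilon}}^{cu}$ and $b\in T_{p}W_{\Lambda_{\varepsilon}}^{cs}$ and solving the $x$-, $v$-, $z$-components for the free parameters of $b$, substitution into the $y$-component reduces surjectivity for every $w\in\mathbb{R}^{n}$ to surjectivity of the single block
\[
\bigl[\,A_{yx}-B_{yx}-B_{yv}A_{vx}\ \big|\ A_{yz}-B_{yz}-B_{yv}A_{vz}\,\bigr]:\mathbb{R}^{u}\times\mathbb{R}^{c}\to\mathbb{R}^{u},
\]
and this surjectivity follows from invertibility of its first $u\times u$ sub-block alone.

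The main (and only real) obstacle is then to identify this first sub-block with $\tfrac{\partial y}{\partial x}(\varepsilon,x^{\ast})$. Differentiating (\ref{eq:f-def-different-dim}) via the chain rule, the cross term $B_{yv}A_{vx}$ appears precisely because the $v$-argument of $\mathrm{cs}$ in (\ref{eq:f-def-different-dim}) is slaved to the $v$-projection of the $W^{cu}$-parametrisation---this is the genuine novelty compared with Theorems~\ref{th:fixed-pt} and~\ref{th:transversality-2}, where no such composition occurs. Once the identity $\tfrac{\partial y}{\partial x}(\varepsilon,x^{\ast})=A_{yx}-B_{yx}-B_{yv}A_{vx}$ is established, Lemma~\ref{lem:trans} supplies invertibility, the tangent spaces sum to $\mathbb{R}^{n}$, and the $c$ remaining degrees of freedom in the reduced linear system give the expected $c$-dimensional intersection of the tangent spaces, completing the transversality argument.
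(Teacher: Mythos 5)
Your proposal is correct, and it takes a genuinely more explicit route than what the paper hints at. The paper's own proof is a one-liner (``follows from similar arguments to the proof of Theorem~\ref{th:transversality-2}''), and mimicking that earlier proof literally would require first performing a coordinate change $v\mapsto v-\pi_v w^{cu}(\varepsilon,\mathrm{cu}(\varepsilon,x,z))$ to put the composite in (\ref{eq:f-def-different-dim}) into the Lemma~\ref{lem:trans} framework, establishing transversality on the section $\{z=z^{\ast},\,\tilde v=0\}$, and then separately arguing that the tangent spaces span the $v$- and $z$-directions. You instead bypass the slicing altogether: you write both tangent spaces as graphs of linear maps in the fixed ambient coordinates $(x,y,v,z)$ and reduce $T_pW^{cu}_{\Lambda_\varepsilon}+T_pW^{cs}_{\Lambda_\varepsilon}=\mathbb{R}^n$ directly to a linear system, which collapses to invertibility of the $u\times u$ block $A_{yx}-B_{yx}-B_{yv}A_{vx}$. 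Your chain-rule identification of that block with $\partial y/\partial x(\varepsilon,x^{\ast})$ is correct: writing $\phi^{cu}_{y}(x,z)=\pi_y w^{cu}(\varepsilon,\mathrm{cu}(\varepsilon,x,z))$, $\phi^{cu}_{v}(x,z)=\pi_v w^{cu}(\varepsilon,\mathrm{cu}(\varepsilon,x,z))$, and $\phi^{cs}(x,v,z)=\pi_y w^{cs}(\varepsilon,\mathrm{cs}(\varepsilon,x,v,z))$, equation (\ref{eq:f-def-different-dim}) becomes $y(\varepsilon,x)=\phi^{cu}_y(x,z^{\ast})-\phi^{cs}\bigl(x,\phi^{cu}_v(x,z^{\ast}),z^{\ast}\bigr)$, whose $x$-derivative at the intersection point is exactly $A_{yx}-B_{yx}-B_{yv}A_{vx}$. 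Invertibility then comes from the first bullet of Lemma~\ref{lem:trans}, and the rest of the linear system (the $x$-, $v$-, $z$-rows) is solved trivially for the free parameters of the $W^{cs}$-side, giving the expected $c$-dimensional intersection of tangent spaces. Your approach has the advantage of being self-contained and uniform over the $v$- and $z$-directions, at the cost of one explicit chain-rule bookkeeping step; the paper's approach is modular but leaves the adaptation of the graph transformation to the reader. Both are valid, and yours is arguably cleaner for the unequal-dimension case.
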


\begin{proof}
The proof follows from similar arguments to the proof of Theorem
\ref{th:transversality-2}.
\end{proof}

%TCIDATA{OutputFilter=latex2.dll}
%TCIDATA{Version=5.00.0.2606}
%TCIDATA{LaTeXparent=0,0,online-edit.tex}

\section{Verification of assumptions \label{sec:verif}}

In this section we comment on how to obtain bounds on the function $y\left(
\varepsilon,x\right)  $ considered in (\ref{eq:f-for-fixed-pt}),
(\ref{eq:f-lambda}) and (\ref{eq:f-def-different-dim}), which are needed to
apply Lemma \ref{lem:y-zero-practical}. Our discussion will be divided into
two parts. The first concerns the bounds on the center-stable and
center-unstable manifolds. The second treats the implicit computations which
are needed for (\ref{eq:implicit-cond-fixed-pt}), (\ref{eq:implicit-lambda})
and (\ref{eq:impl-different-dim-1}--\ref{eq:impl-different-dim-2}).

\subsection{Obtaining bounds on the center-stable and center-unstable
manifolds}

In this section we discuss how one can obtain bounds on $w^{cu}(\varepsilon
,\mathrm{cu})$ and $w^{cs}(\varepsilon,\mathrm{cs})$. We are aware of two
alternative methods to obtain (local) bounds on the center-stable and
center-unstable manifolds of normally hyperbolic manifolds. The first is the
parametrisation method \cite{param-method, AlexJordi, llave4, llave5}, which
is a functional analytic method that allows for both efficient numerical
estimation of the invariant manifolds, as well as the for interval arithmetic
validation of bounds for the parameterisations of the manifolds together with
estimates on the derivatives. The second is a geometric method developed in
\cite{conecond, Geom, Meln}. This method can also be used for computer
assisted validation of the manifolds and the derivatives of their
parameterisations. In our example from section \ref{sec:example} we use the
second of the two methods.

Both of the above mentioned methods usually provide only local
parameterisations. Let us denote these as $w_{\mathrm{loc}}^{cu}%
:E\times\Lambda\times B_{u}\rightarrow\mathbb{R}^{n}\mathbb{\ }$%
and$\mathbb{\ }w_{\mathrm{loc}}^{cu}:E\times\Lambda\times B_{u}\rightarrow
\mathbb{R}^{n}$. These can be extended by using the flow $\Phi_{t}%
^{\varepsilon}$ of the ODE by fixing some $T>0$ and taking%
\begin{align*}
w^{cu}  &  =\Phi_{T}^{\varepsilon}\circ w_{\mathrm{loc}}^{cu},\\
w^{cs}  &  =\Phi_{-T}^{\varepsilon}\circ w_{\mathrm{loc}}^{cs}.
\end{align*}
There are efficient algorithms and packages that can be used to obtain
interval arithmetic enclosure of the flow, together with the (high order)
derivatives with respect to both the initial condition and parameter. In our
example from section \ref{sec:example} we use the CAPD\footnote{Computer
Assisted Proofs in Dynamics: http://capd.ii.uj.edu.pl} library, which is based
on the Lohner algorithms from \cite{WZ, Z}.

\subsection{Implicit computations\label{sec:implicit}}

The questions that interest us here is how to obtain bounds on $\mathrm{u}%
\left(  \varepsilon,x\right)  ,$ $\mathrm{s}\left(  \varepsilon,x\right)  $
from (\ref{eq:implicit-cond-fixed-pt}), and on $\mathrm{cu}$ and $\mathrm{cs}
$ from (\ref{eq:implicit-lambda}) and (\ref{eq:impl-different-dim-1}%
--\ref{eq:impl-different-dim-2}). We want to compute bounds on these
functions, together with their first and second derivatives.

All of the three cases: (\ref{eq:implicit-cond-fixed-pt}),
(\ref{eq:implicit-lambda}) and (\ref{eq:impl-different-dim-1}%
--\ref{eq:impl-different-dim-2}), can be treated in the same way. We will
describe this by introducing abstract notation. We will consider
$g:\mathbb{R}\times\mathbb{R}^{k}\times\mathbb{R}^{k}\rightarrow\mathbb{R}%
^{k}$ and will want to find $\kappa:\mathbb{R}\times\mathbb{R}^{k}%
\rightarrow\mathbb{R}^{k}$, together with its first and second derivatives,
such that%
\begin{equation}
g\left(  \varepsilon,x,\kappa\left(  \varepsilon,x\right)  \right)
=0.\label{eq:g-is-zero}%
\end{equation}
We assume that $\frac{\partial g}{\partial\kappa}$ is an isomorphism.

We start with the bounds for the image of $\kappa\left(  \varepsilon,x\right)
$. This can be obtained by using the Interval Newton method (see Theorem
\ref{th:interval-Newton}). To apply the theorem we can take a cubical set $K$
and $X=E\times U$, where $U$ is some cubical set in $\mathbb{R}^{k}$, and
verify that for some $k_{0}\in K$%
\begin{equation}
k_{0}-\left[  \frac{\partial g}{\partial\kappa}\left(  X,K\right)  \right]
^{-1}\left[  g\left(  X,k_{0}\right)  \right]  \subset\mathrm{int}K.
\label{eq:Newton-contraction}%
\end{equation}
If (\ref{eq:Newton-contraction}) is satisfied, then by Theorem
\ref{th:interval-Newton} we obtain that for any $\left(  \varepsilon,x\right)
\in E\times U$
\[
\kappa\left(  \varepsilon,x\right)  \in K.
\]

Now we turn to the derivatives of $\kappa$. By (\ref{eq:g-is-zero}),%
\begin{align*}
0  &  =\frac{d}{d\varepsilon}g\left(  \varepsilon,x,\kappa\left(
\varepsilon,x\right)  \right)  =\frac{\partial g}{\partial\varepsilon}\left(
\varepsilon,x,\kappa\left(  \varepsilon,x\right)  \right)  +\frac{\partial
g}{\partial\tau}\left(  \varepsilon,x,\kappa\left(  \varepsilon,x\right)
\right)  \frac{\partial\kappa}{\partial\varepsilon}\left(  \varepsilon
,x\right)  ,\\
0  &  =\frac{d}{dx}g\left(  \varepsilon,x,\kappa\left(  \varepsilon,x\right)
\right)  =\frac{\partial g}{\partial x}\left(  \varepsilon,x,\kappa\left(
\varepsilon,x\right)  \right)  +\frac{\partial g}{\partial\tau}\left(
\varepsilon,x,\kappa\left(  \varepsilon,x\right)  \right)  \frac
{\partial\kappa}{\partial x}\left(  \varepsilon,x\right)  .
\end{align*}
This means that for $K$ and $X$ for which (\ref{eq:Newton-contraction}) holds,
for $\left(  \varepsilon,x\right)  \in E\times U$%
\begin{align}
\frac{\partial\kappa}{\partial\varepsilon}\left(  \varepsilon,x\right)   &
\in-\left[  \frac{\partial g}{\partial\kappa}\left(  \varepsilon,x,K\right)
\right]  ^{-1}\left[  \frac{\partial g}{\partial\varepsilon}\left(
\varepsilon,x,K\right)  \right]  ,\label{eq:kappa-e-enclosure}\\
\frac{\partial\kappa}{\partial x}\left(  \varepsilon,x\right)   &  \in-\left[
\frac{\partial g}{\partial\kappa}\left(  \varepsilon,x,K\right)  \right]
^{-1}\left[  \frac{\partial g}{\partial x}\left(  \varepsilon,x,K\right)
\right]  . \label{eq:kappa-x-enclosure}%
\end{align}
Also from (\ref{eq:g-is-zero}),
\begin{align*}
0  &  =\frac{d^{2}g}{d\varepsilon dx}\\
&  =\frac{d}{d\varepsilon}\left(  \frac{\partial g}{\partial x}+\frac{\partial
g}{\partial\kappa}\frac{\partial\kappa}{\partial x}\right) \\
&  =\frac{\partial^{2}g}{\partial\varepsilon\partial x}+\frac{\partial^{2}%
g}{\partial\kappa\partial x}\frac{\partial\kappa}{\partial\varepsilon}+\left(
\frac{\partial^{2}g}{\partial\varepsilon\partial\kappa}+\frac{\partial^{2}%
g}{\partial\kappa^{2}}\frac{\partial\kappa}{\partial\varepsilon}\right)
\frac{\partial\kappa}{\partial x}+\frac{\partial g}{\partial\kappa}%
\frac{\partial^{2}\kappa}{\partial\varepsilon\partial x}%
\end{align*}
so%
\begin{equation}
\frac{\partial^{2}\kappa}{\partial\varepsilon\partial x}=-\left(
\frac{\partial g}{\partial\kappa}\right)  ^{-1}\left(  \frac{\partial^{2}%
g}{\partial\varepsilon\partial x}+\frac{\partial^{2}g}{\partial\kappa\partial
x}\frac{\partial\kappa}{\partial\varepsilon}+\left(  \frac{\partial^{2}%
g}{\partial\varepsilon\partial\kappa}+\frac{\partial^{2}g}{\partial\kappa^{2}%
}\frac{\partial\kappa}{\partial\varepsilon}\right)  \frac{\partial\kappa
}{\partial x}\right)  . \label{eq:kappa-e-x}%
\end{equation}
Above can be used to obtain an interval enclosure in the same way as
(\ref{eq:kappa-e-enclosure}--\ref{eq:kappa-x-enclosure}). In
(\ref{eq:kappa-e-x}) we can use the enclosures (\ref{eq:kappa-e-enclosure}%
--\ref{eq:kappa-x-enclosure}) for the derivatives $\frac{\partial\kappa
}{\partial\varepsilon}\left(  \varepsilon,x\right)  $ and $\frac
{\partial\kappa}{\partial x}\left(  \varepsilon,x\right)  $.

\begin{remark}
If we consider $g$ that is defined by (\ref{eq:implicit-cond-fixed-pt}) or
(\ref{eq:implicit-lambda}), then (\ref{eq:kappa-e-x}) simplifies since
$\frac{\partial^{2}g}{\partial\varepsilon\partial x}=0$ and $\frac
{\partial^{2}g}{\partial\kappa\partial x}=0.$
\end{remark}

%TCIDATA{OutputFilter=latex2.dll}
%TCIDATA{Version=5.00.0.2606}
%TCIDATA{LaTeXparent=0,0,online-edit.tex}

\section{Example of application \label{sec:example}}

We consider an example from \cite{Wiggins}, which was introduced by Lerman and
Umanski\u{\i} \cite{Lerman}:%
\begin{equation}
x^{\prime}=F\left(  \varepsilon,x\right)  ,\label{eq:our-ode}%
\end{equation}
where the vector field $F:\mathbb{R}\times\mathbb{R}^{4}\rightarrow
\mathbb{R}^{4}$ is%
\[
F=\left(
\begin{array}
[c]{rrrr}%
\lambda & -\omega & 0 & 0\\
\omega & \lambda & 0 & 0\\
0 & 0 & -\lambda & -\omega\\
0 & 0 & \omega & -\lambda
\end{array}
\right)  \left(
\begin{array}
[c]{c}%
x_{1}\\
x_{2}\\
x_{3}\\
x_{4}%
\end{array}
\right)  +\left(
\begin{array}
[c]{c}%
-2\sqrt{2}\lambda x_{3}\left(  x_{3}^{2}+x_{4}^{2}\right)  \\
-2\sqrt{2}\lambda x_{4}\left(  x_{3}^{2}+x_{4}^{2}\right)  \\
2\sqrt{2}\lambda x_{1}\left(  x_{1}^{2}+x_{2}^{2}\right)  \\
2\sqrt{2}\lambda x_{2}\left(  x_{1}^{2}+x_{2}^{2}\right)
\end{array}
\right)  +\varepsilon\left(
\begin{array}
[c]{c}%
x_{2}\\
0\\
x_{4}\\
0
\end{array}
\right)  .
\]
We consider $\lambda=\omega=1$. The zero is a fixed point with a two
dimensional unstable and a two dimensional stable manifold. For $\varepsilon
=0$, the unstable manifold coincides
with the stable manifold (see Figure \ref{fig:homoclinic}).

We shall apply our method to prove the following:

\begin{figure}[ptb]
\begin{center}
\includegraphics[height=5cm]{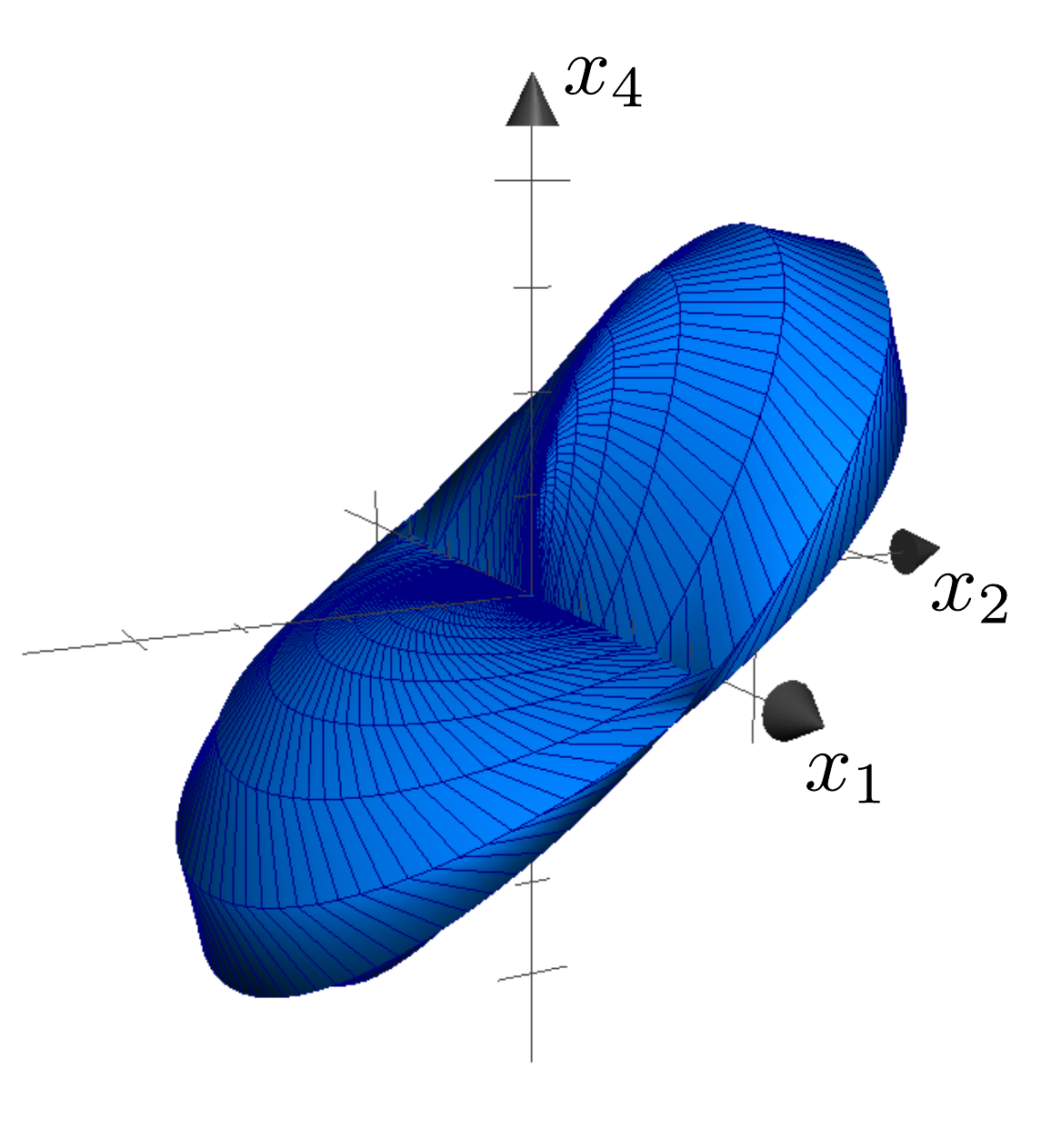}
\end{center}
\caption{The two dimensional stable manifold of zero coincides with the two
dimensional stable manifold. The plot depicts their projection onto coordinates
$x_{1},x_{2}$ and $x_{4}$.\label{fig:homoclinic}}%
\end{figure}

\begin{theorem}
\label{th:example}For any $\varepsilon\in (0,10^{-7}]$ the stable and
unstable manifolds of the origin intersect transversally within a $10^{-5}$
distance of the point $p_{0}=\left(  2^{-1/4},0,2^{-1/4}%
,0\right)  .$
\end{theorem}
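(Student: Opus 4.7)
The setup places us squarely in Section~\ref{sec:fixed-pt} with $u=s=k=2$. Since for $\varepsilon=0$ the two two-dimensional manifolds \emph{coincide} (as depicted in Figure~\ref{fig:homoclinic}), the function $y(\varepsilon,x)$ defined by (\ref{eq:f-for-fixed-pt}) satisfies $y(0,\cdot)\equiv 0$ on its entire domain. This is the fully degenerate case $k_1=0$, $k_2=k=2$ of Section~\ref{sec:zeros}. My plan is to verify the hypotheses of Lemma~\ref{lem:y-zero-practical} and invoke the transversal version of Theorem~\ref{th:fixed-pt} on a ball $U=B_2(\pi_x p_0,R)$ with $R$ slightly smaller than $10^{-5}$ and $\epsilon_0=10^{-7}$.

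As local coordinates I choose $x=(x_1,x_3)$ as the ``base'' and $y=(x_2,x_4)$ as the transverse direction; at $p_0$ the $x_2,x_4$ components vanish, and the rotational block structure of the linear part of $F$ makes the projection onto $(x_1,x_3)$ a local diffeomorphism for both $W^u$ and $W^s$ near $p_0$. To produce $w^u,w^s$ I first build local parametrisations $w^u_{\mathrm{loc}},w^s_{\mathrm{loc}}$ near the origin using the geometric cone-condition method of \cite{conecond, Geom, Meln}, and extend them to a neighbourhood of $p_0$ by composing with $\Phi^\varepsilon_{\pm T}$ for an appropriate transition time $T$, rigorously enclosing the $C^2$ dependence on both $(x,\varepsilon)$ with the CAPD library. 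Conditions \textbf{A} and \textbf{B} and (\ref{eq:common-projection-1}) are then checked by the interval Newton step (\ref{eq:Newton-contraction}), which simultaneously yields bounds on $\mathrm{u}(\varepsilon,x),\mathrm{s}(\varepsilon,x)$; their first and second derivatives follow from the implicit formulas (\ref{eq:kappa-e-enclosure})--(\ref{eq:kappa-e-x}) of Section~\ref{sec:implicit}.

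Since $k_1=0$, condition (\ref{eq:estmA11}) is vacuous and the single non-trivial inequality is (\ref{eq:estmA22}):
\[
m(A_{22})R > \left\|\tfrac{\partial y_2}{\partial \varepsilon}(E,p)\right\| + \|\Delta_2\|\,R,
\]
with $p=\pi_x p_0$ and $A_{22}=\tfrac{\partial^2 y}{\partial \varepsilon \partial x}(0,p)$ the Melnikov-type $2\times 2$ matrix measuring the first-order splitting. Because $y(0,\cdot)\equiv 0$, the mean value theorem gives $\|\tfrac{\partial y_2}{\partial\varepsilon}(E,p)\|\le \epsilon_0\sup_{E}\|\tfrac{\partial^2 y}{\partial\varepsilon^2}(\cdot,p)\|$, so with $\epsilon_0=10^{-7}$ this term is numerically small compared to the expected size $m(A_{22})R\sim 10^{-5}$. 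The inequality therefore reduces quantitatively to checking (i) $m(A_{22})$ bounded below by an explicit constant of order one, and (ii) $\|\Delta_2\|$ smaller than $m(A_{22})$; both are single interval-arithmetic evaluations, at $(0,p)$ and over the box $E\times U$ respectively. Lemma~\ref{lem:trans} then promotes the zero to a unique transversal intersection inside $U$, and an additional enclosure of $\pi_y w^u(\varepsilon,\mathrm{u}(\varepsilon,x))$ on $E\times U$ ensures that the resulting intersection point lies within $10^{-5}$ of $p_0$ in $\mathbb{R}^4$.

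The main obstacle will be the rigorous control of $\Delta_2$ and $m(A_{22})$. The matrix $A_{22}$ requires evaluating $\tfrac{\partial^2 y_2}{\partial \varepsilon\partial x}(0,p)$, which in practice means differentiating a flow-propagated parametrisation of the unperturbed homoclinic manifold with respect to both initial condition and parameter; the bound $\Delta_2$ demands $C^2$ variational estimates on the box $E\times U$ for the fairly long integration time $T$ needed to reach $p_0$ from a neighbourhood of the origin. Keeping the wrapping effect on these second derivatives small enough to verify (\ref{eq:estmA22}) at the stated thresholds $R=10^{-5}$ and $\epsilon_0=10^{-7}$ is the essential computer-assisted ingredient; once achieved, Lemmas~\ref{lem:y-zero-practical} and~\ref{lem:trans} deliver the conclusion for every $\varepsilon\in(0,10^{-7}]$.
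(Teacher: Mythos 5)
Your overall roadmap matches the paper's: the setting is Section~\ref{sec:fixed-pt} with $u=s=k=2$, the fully degenerate case $k_1=0$, $k_2=2$, and the proof is an application of Lemma~\ref{lem:y-zero-practical} (only condition~(\ref{eq:estmA22})) together with Lemma~\ref{lem:trans}, with the manifolds obtained by the cone-condition method of \cite{Meln} near the origin and propagated by $\Phi^\varepsilon_{\pm T}$ in CAPD. However, there is a concrete gap in your choice of coordinates that would make the construction fail.

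You propose to use $x=(x_1,x_3)$ as the base variables and $y=(x_2,x_4)$ as the splitting direction, arguing that the ``rotational block structure'' of $DF$ makes the projection to $(x_1,x_3)$ a local diffeomorphism on $W^u$ and $W^s$ near $p_0$. This is false. At $p_0$ the common tangent space $T_{p_0}W^u_{\Lambda_0}=T_{p_0}W^s_{\Lambda_0}$ is spanned (with $\lambda=\omega=1$) by
\[
J\nabla H(p_0)=2^{-1/4}\left(-1,\,1,\,1,\,1\right),\qquad
J\nabla K(p_0)=2^{-1/4}\left(0,\,-1,\,0,\,-1\right),
\]
and $J\nabla K(p_0)$ has \emph{zero} components in both $x_1$ and $x_3$. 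Hence the $(x_1,x_3)$-projection of the tangent plane is one-dimensional, so conditions \textbf{A} and \textbf{B} of Section~\ref{sec:fixed-pt} cannot hold and the interval Newton step~(\ref{eq:Newton-contraction}) used to define $\mathrm{u}(\varepsilon,x)$, $\mathrm{s}(\varepsilon,x)$ would not be solvable. (The same obstruction kills the pair $(x_2,x_4)$; $(x_1,x_2)$, $(x_1,x_4)$, $(x_2,x_3)$, $(x_3,x_4)$ would all work as base pairs, but that is an accident you would have to check.) The paper avoids this by building a tailored affine chart $V$ whose first two columns are $J\nabla H(p_0)$ and $J\nabla K(p_0)$ (the tangent directions) and whose last two columns are orthogonal to these; the splitting is then measured in the orthogonal complement of the tangent plane, which is the geometrically natural choice and guarantees the graph condition by construction.

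A secondary issue: the claim that $y(0,\cdot)\equiv 0$ together with the mean value theorem gives $\left\|\tfrac{\partial y_2}{\partial\varepsilon}(E,p)\right\|\le\epsilon_0\sup_E\left\|\tfrac{\partial^2 y}{\partial\varepsilon^2}(\cdot,p)\right\|$ does not follow; $y(0,\cdot)\equiv 0$ forces $\tfrac{\partial y}{\partial x}(0,\cdot)=0$ but says nothing about $\tfrac{\partial y}{\partial\varepsilon}(0,p)$. In the paper this term is simply evaluated directly by interval arithmetic, and its size ($\approx 1.4\cdot 10^{-5}$) is in fact comparable to $m(A_{22})R\approx 3.4\cdot 10^{-5}$ rather than negligible; the inequality~(\ref{eq:estmA22}) is verified only with a slim margin of about $1.38\cdot 10^{-7}$, so the ``numerically small'' heuristic should not be trusted here.
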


In subsequent sections we go over the steps which were taken to prove the result.

\subsection{Local bounds on the stable/unstable
manifolds\label{sec:local-bounds}}

For (\ref{eq:our-ode}) with $\varepsilon=0$ it is possible to derive analytic formulae for the parameterisation of the stable/unstable manifold (see \cite{Wiggins}). We do not make use of this. We choose not to, since our method does not require knowing the analytic formulae for the separatrices. For us it is sufficient to establish bounds on the manifolds, which is what we discuss in this section.

To obtain computer assisted bounds on the manifolds we use the method developed in \cite{Meln}. It
can be used to establish bounds on stable and unstable manifolds, together with their derivatives, within a
given neighbourhood of a fixed point. It is best o obtain such bounds in 
local coordinates in which the invariant manifold is `straightened out'. We start by discussing a change of coordinates
that we used to achieve this.

For $\varepsilon=0$ the system is generated by a hamiltonian $H$ with an integral $K$
of the form%
\begin{align*}
H &  =\lambda\left(  x_{1}x_{3}+x_{2}x_{4}\right)  -\omega\left(  x_{2}%
x_{3}-x_{1}x_{4}\right)  -\frac{\lambda}{\sqrt{2}}\left[  \left(  x_{1}%
^{2}+x_{2}^{2}\right)  ^{2}+\left(  x_{3}^{2}+x_{4}^{2}\right)  ^{2}\right]
,\\
K &  =x_{2}x_{3}-x_{1}x_{4},
\end{align*}
where the $(x_1,x_2)$ are the positions and $(x_3,x_4)$ are their conjugated momenta.
We can use the integrals $H,K$ to approximate the two dimensional
stable/unstable manifolds. We discuss this for the unstable manifold, which is
tangent to the vector space spanned on the coordinates $x_{1}$ and $x_{2}$.
From this tangency, for small $\left\Vert x\right\Vert $ the $x_{3}$ and
$x_{4}$ along the unstable manifold will be small. The fixed point has
$H=K=0$, meaning that this will be preserved along the manifold. Since $K=0$,
and $\left(  x_{3}^{2}+x_{4}^{2}\right)  ^{2}$ is small in comparison to the
remaining terms of $H$, from $H=0$ we obtain
\[
\lambda\left(  x_{1}x_{3}+x_{2}x_{4}\right)  -\lambda2^{-1/2}\left(  x_{1}%
^{2}+x_{2}^{2}\right)  ^{2}\approx0.
\]
Thus, the unstable manifold is approximated by points satisfying%
\begin{align*}
x_{2}x_{3}-x_{1}x_{4} &  =0,\\
x_{1}x_{3}+x_{2}x_{4} &  =2^{-1/2}\left(  x_{1}^{2}+x_{2}^{2}\right)  ^{2}.
\end{align*}
For fixed $x_{1},x_{2}$, above is a linear equation for $x_{3},x_{4}$, with
the solution $x_{3}=2^{-1/2}x_{1}\left(  x_{1}^{2}+x_{2}^{2}\right)  ,$
$x_{4}=2^{-1/2}x_{2}\left(  x_{1}^{2}+x_{2}^{2}\right)  ^{2}.$ This means that
close to the origin the unstable manifold can be approximated by
\[
W^{u}\approx\left\{  \left(  x_{1},x_{2},2^{-1/2}x_{1}\left(  x_{1}^{2}%
+x_{2}^{2}\right)  ,2^{-1/2}x_{2}\left(  x_{1}^{2}+x_{2}^{2}\right)
^{2}\right)  :x_{1},x_{2}\text{ are small}\right\}  .
\]

From mirror arguments it follows that
\[
W^{s}\approx\left\{  \left(  2^{-1/2}x_{3}\left(  x_{3}^{2}+x_{4}^{2}\right)
,2^{-1/2}x_{2}\left(  x_{3}^{2}+x_{4}^{2}\right)  ^{2},x_{3},x_{4}\right)
:x_{3},x_{4}\text{ are small}\right\}  .
\]

We can therefore use the following change of coordinates that straighten out
the unstable manifold. Let $v=\left(  v_{1},\ldots,v_{4}\right)  $ denote the
local coordinates defined by
\begin{equation}
x=\Psi_{u}\left(  v\right)  =\left(  v_{1},v_{2},v_{3}+2^{-1/2}\left(
v_{1}^{2}+v_{2}^{2}\right)  v_{1},v_{4}+2^{-1/2}\left(  v_{1}^{2}+v_{2}%
^{2}\right)  v_{2}\right)  .\label{eq:local-unst}%
\end{equation}
In the local coordinates $\left\{  v_{3}=v_{4}=0\right\}  $ approximates the
unstable manifold. 

To straighten out
the stable manifold we use local coordinates given by%
\[
x=\Psi_{s}\left(  v\right)  =\left(  v_{1}+2^{-1/2}v_{3}\left(  v_{3}%
^{2}+v_{4}^{2}\right)  ,v_{2}+2^{-1/2}v_{2}\left(  v_{3}^{2}+v_{4}^{2}\right)
^{2},v_{3},v_{4}\right).
\]
In these coordinates $\left\{  v_{1}=v_{2}=0\right\}  $ approximates the
stable manifold.

We now discuss how we obtain local bounds on the unstable manifold in the local coordinates given by $\Psi_u$.
The inverse change to $\Psi_u$ is%
\[
v=\Psi_{u}^{-1}\left(  x\right)  =\left(  x_{1},x_{2},x_{3}-2^{-1/2}\left(
x_{1}^{2}+x_{2}^{2}\right)  x_{1},x_{4}-2^{-1/2}\left(  x_{1}^{2}+x_{2}%
^{2}\right)  x_{2}\right)  .
\]
The formula for the vector field in the local coordinates
\[
v^{\prime}=G\left(  \varepsilon,v\right)  =D\Psi_{u}^{-1}\left(  \Psi_{u}\left(
v\right)  \right)  F(\varepsilon,\Psi_{u}\left(  v\right)  ),
\]
can be easily derived, though it is somewhat lengthy so we do not write it out
here. We use this vector field to establish the bounds on the unstable
manifold using the method described in \cite[Theorems 30, 36]{Meln}. In our
application we extend the phase space to include the parameter $\varepsilon$%
\[
\left(  \varepsilon,v\right)  ^{\prime}=\left(  0,G\left(  \varepsilon
,v\right)  \right)  .
\]
This means that we consider a normally hyperbolic manifold $\Lambda=\left\{
\left(  \varepsilon,0\right)  :\varepsilon \in E\right\}  $, where 
\[E=\left[0,10^{-7}\right]\] 
is an interval of
parameters. We use the same $\Psi_{u}$ for all $\varepsilon\in E  $. The method from \cite{Meln} allowed us to obtain
the bounds on the manifold in the extended phase space, together with the
bounds on its first and second derivatives.
The obtained local bounds on the unstable manifold are in the form of a graph%
\begin{equation}
\left\{  v=\left(  \varepsilon,v_{1},v_{2},w_{\mathrm{loc}}^{u}\left(
\varepsilon,v_{1},v_{2}\right)  \right)  :\varepsilon,v_{1},v_{2}\in
B_{u}\right\}  ,\label{eq:unst-bd}
\end{equation}
where $B_{u}=E\times\left[  -r,r\right]  \times\left[  -r,r\right]  $, with
$r=1.5\cdot10^{-4}$. We prove that $w_{\mathrm{loc}}^{u}:B_{u}\rightarrow
\mathbb{R}^{2}$ is Lipschitz with the constant $L=10^{-8}.$ In Figure
\ref{fig:WuLocal} we give the plot of the obtained bound. By looking at the
scale on the vertical axis, we see that the bounds are quite sharp. Moreover, from the
method we establish that
\[
\left\vert \frac{\partial^{2}\pi_{v_{3}}w_{\mathrm{loc}}^{u}}{\partial
v_{i}\partial v_{j}}\right\vert ,\left\vert \frac{\partial^{2}\pi_{v_{3}%
}w_{\mathrm{loc}}^{u}}{\partial\varepsilon\partial v_{j}}\right\vert
\leq3.518\cdot10^{-5}\qquad\text{for }i,j=1,2.
\]

The bounds can easily be transported to the original coordinates
$x_{1},x_{2},x_{3},x_{4}$ using (\ref{eq:local-unst}). In Figure
\ref{fig:Wu} we give a plot in of the bounds from Figure \ref{fig:WuLocal}
transported to the original coordinates of the system. 

We use the same method to establish bounds on the stable manifolds an its
derivatives. The Lipschitz bound on the slope of the manifold and the bounds on the second
derivatives are identical as for the unstable manifold.

\begin{figure}[ptb]
\begin{center}
\includegraphics[height=4cm]{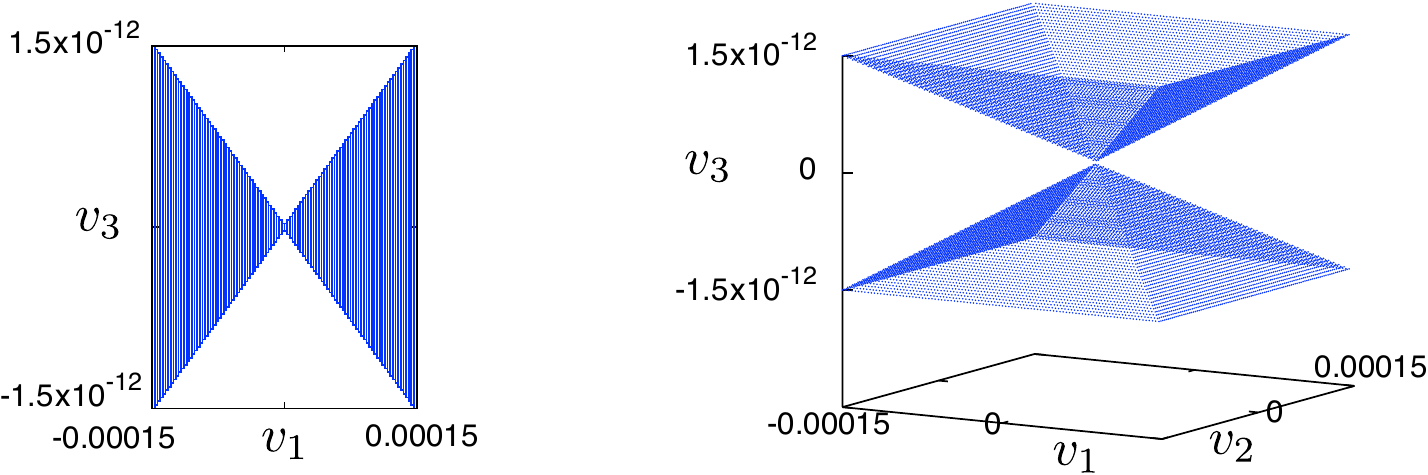}
\end{center}
\caption{Projections of the bounds on the unstable manifold in local
coordinates. On the left, we have the intersection of the bound on the manifold (\ref{eq:unst-bd}) with
$\{v_{2}=0\}$, projected onto $v_{1},v_{3}$ coordinates. The manifold lies
within the boxed area. On the right we have the projection of the bound on the
manifold (\ref{eq:unst-bd}) onto the $v_{1},v_{2},v_{3}$ coordinates. The manifold is a horizontal
two dimensional surface, which lies between the upper and the lower
cones. }%
\label{fig:WuLocal}%
\end{figure}\begin{figure}[ptb]
\begin{center}
\includegraphics[height=4cm]{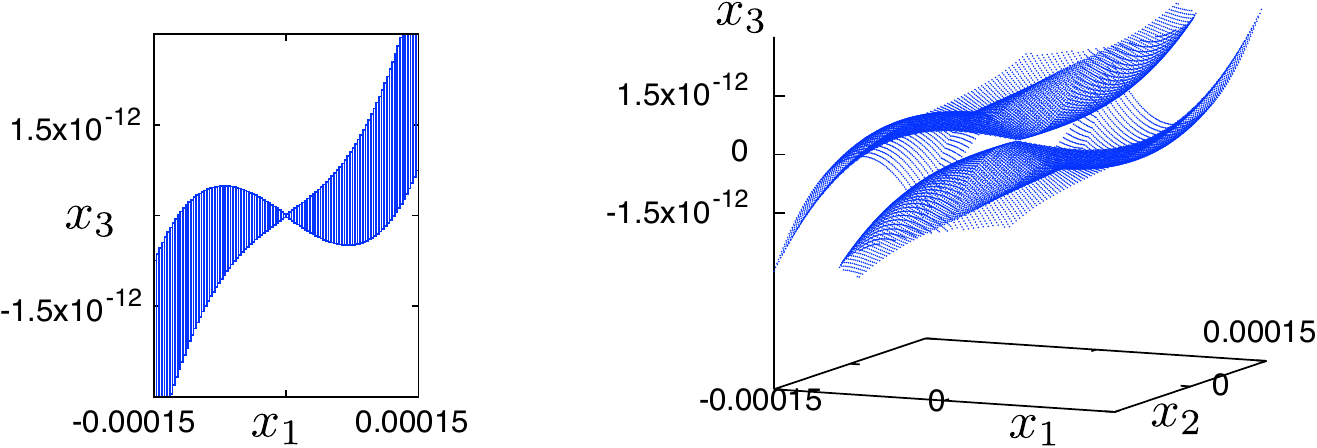}
\end{center}
\caption{The bounds from Figure \ref{fig:WuLocal} in the original
coordinates.}%
\label{fig:Wu}%
\end{figure}

\subsection{The distance function at the intersection point}

We will look for an intersection of the invariant manifolds close to the point $p_{0}=\left(
2^{-1/4},0,2^{-1/4},0\right)  $. (We have found this point by considering
$H=0$ and taking $x_{2}=x_{4}=0$.) The stable and unstable manifolds are tangent to
$\left\{  H=0\right\}  $ and $\left\{  K=0\right\}  $. We compute two tangent
vectors%
\begin{equation}
J\nabla H\left(  p_{0}\right)  =2^{-1/4}\left(
\begin{array}
[c]{c}%
-\lambda\\
\omega\\
\lambda\\
\omega
\end{array}
\right)  ,\qquad\qquad J\nabla K\left(  p_{0}\right)  =2^{-1/4}\left(
\begin{array}
[c]{c}%
0\\
-1\\
0\\
-1
\end{array}
\right)  ,\label{eq:H-K-tangents}%
\end{equation}
where
\[
J=\left(
\begin{array}
[c]{ll}%
0 & \mathrm{Id}\\
-\mathrm{Id} & 0
\end{array}
\right),  \qquad\mathrm{Id}=\left(
\begin{array}
[c]{ll}%
1 & 0\\
0 & 1
\end{array}
\right)  .
\]
For $\left(  \mathrm{x},\mathrm{y}\right)  =\left(  \mathrm{x}_{1}%
,\mathrm{x}_{2},\mathrm{y}_{1},\mathrm{y}_{2}\right)  \in\mathbb{R}^{4},$ we
choose the local coordinates around $p_{0}$ defined as
\begin{equation}
x=V\left(  \mathrm{x},\mathrm{y}\right)  :=p_{0}+A\left(
\begin{array}
[c]{c}%
\mathrm{x}\\
\mathrm{y}%
\end{array}
\right),  \qquad\text{for\qquad}A=\left(
\begin{array}
[c]{rrrr}%
-\lambda & 0 & 1 & 0\\
\omega & -1 & 0 & 1\\
\lambda & 0 & 1 & 0\\
\omega & -1 & 0 & -1
\end{array}
\right)  .\label{eq:V-def}%
\end{equation}
The first two columns in $A$ are taken based on (\ref{eq:H-K-tangents}). The
last two are chosen so that the two vectors composed of columns three and four
would be orthogonal to the others.

We can now propagate the bounds on the stable and unstable manifolds from
section \ref{sec:local-bounds} to the intersection point as follows. We 
take a fixed $T>0$ and define%
\begin{align*}
w^{u}\left(  \varepsilon,x\right)    & :=V^{-1}\circ\Phi_{T}^{\varepsilon
}\circ\Psi_{u}\left(  x,w_{\mathrm{loc}}^{u}\left(  \varepsilon,x\right)
\right)  ,\\
w^{s}\left(  \varepsilon,x\right)    & :=V^{-1}\circ\Phi_{-T}^{\varepsilon
}\circ\Psi_{s}\left(  w_{\mathrm{loc}}^{s}\left(  \varepsilon,x\right)
,x\right)  .
\end{align*}
In our computer assisted proof we take $T=9,$ which is sufficient to reach
$p_{0}$ from the local bounds established in section \ref{sec:local-bounds}.
\begin{remark} We are making our computations using the CAPD\footnote{Computer
Assisted Proofs in Dynamics: http://capd.ii.uj.edu.pl/} library. The library performs rigorous propagation of interval enclosures of jets along the flow of a vector field. This allows us to propagate in interval arithmetic the local bounds obtained in section \ref{sec:local-bounds}.
\end{remark}
We then use the method outlined in section \ref{sec:implicit} to establish bounds on $u\left(
\varepsilon,\mathrm{x}\right)  $ and $s\left(  \varepsilon,\mathrm{x}\right)
$ which solve
\begin{align*}
\pi_{\mathrm{x}}w^{u}\left(  \varepsilon,u\left(  \varepsilon,\mathrm{x}%
\right)  \right)  -\mathrm{x}  & =0,\\
\pi_{\mathrm{x}}w^{s}\left(  \varepsilon,s\left(  \varepsilon,\mathrm{x}%
\right)  \right)  -\mathrm{x}  & =0.
\end{align*}
The distance function which we consider for
the proof of the transversal intersection is
\begin{equation}
\mathrm{y}\left(  \varepsilon,\mathrm{x}\right)  :=\pi_{\mathrm{y}}%
w^{u}\left(  \varepsilon,u\left(  \varepsilon,\mathrm{x}\right)  \right)
-\pi_{\mathrm{y}}w^{s}\left(  \varepsilon,s\left(  \varepsilon,\mathrm{x}%
\right)  \right)  .\label{eq:y-example}%
\end{equation}

\subsection{Computer assisted bounds}

In this section we take $\mathrm{y}\left(  \varepsilon,\mathrm{x}\right)  $
defined in (\ref{eq:y-example}) and use Lemmas \ref{lem:y-zero-practical},
\ref{lem:trans} to establish the proof of Theorem \ref{th:example}.

\begin{remark}
Since the manifolds are both two dimensional and prior to the perturbation
they overlap we take $k_{1}=0$ and $k_{2}=2$ in Lemmas
\ref{lem:y-zero-practical}, \ref{lem:trans}. This means that there are no $A_{11}, A_{12}, A_{21}$ matrices to consider and we do not need to check the condition (\ref{eq:estmA11}).
\end{remark}

The point $p_{0}=\left(  2^{-1/4},0,2^{-1/4},0\right)  $ when transported to
the local coordinates given by $V$ defined in (\ref{eq:V-def}) is the origin.
This means that we are looking for zeros of $\mathrm{y}\left(  \varepsilon
,\mathrm{x}\right)  $ in a neighbourhood of zero.

We consider $U:=E\times\left[  -R,R\right]  \times\left[  -R,R\right]  $ with
$R=10^{-5}.$ Using computer assisted computations in CAPD we have obtained the
following bounds:%
\begin{align}
A_{22} & =\left[  \frac{\partial^{2}\mathrm{y}}{\partial\varepsilon
\partial\mathrm{x}}\left(  \varepsilon=0,\mathrm{x}=0\right)  \right]
\label{eq:A-bound}\\
& =\left(
\begin{array}
[c]{ll}%
\lbrack5.878219435,5.878219454] & \left[  -13.12140618,-13.12140616\right]  \\
\left[  4.972558758,4.97255877\right]   & \left[
-2.358981737,-2.358981727\right]
\end{array}
\right)  ,\nonumber
\end{align}%
\begin{align}
\Delta_{2}  & =\left[  \frac{\partial^{2}\mathrm{y}}{\partial\varepsilon
\partial\mathrm{x}}\left(  U\right) -A_{22} \right]  \label{eq:Delta-bound}\\
& =\left(
\begin{array}
[c]{ll}%
\left[  -1.299703331,1.286153144\right]   & \left[
-0.9977804236,0.9891960037\right]  \\
\left[  -0.7568318161,0.7534173913\right]   & \left[
-0.5842185843,0.5818916067\right]
\end{array}
\right)  ,\nonumber
\end{align}
and%
\begin{equation}
\left[  \frac{\partial\mathrm{y}}{\partial\varepsilon}\left(  U\right)
\right]  =\left(
\begin{array}
[c]{c}%
\left[  -1.030549066e-05,1.030549066e-05\right]  \\
\left[  -9.608989689e-06,9.608989695e-06\right]
\end{array}
\right)  .\label{eq:y-bound}%
\end{equation}

From (\ref{eq:A-bound}--\ref{eq:y-bound}) we compute that
\begin{align*}
m\left(  A_{22}\right)    & \geq3.423087786,\\
\left\Vert \Delta_{2}\right\Vert  & \leq2.000249209,\\
\left\Vert \frac{\partial\mathrm{y}}{\partial\varepsilon}\left(  U\right)
\right\Vert  & \leq1.409027398\cdot10^{-5},
\end{align*}
which gives
\[
m\left(  A_{22}\right)  R-\left\Vert \frac{\partial\mathrm{y}}{\partial\varepsilon
}\left(  U\right)  \right\Vert -\left\Vert \Delta_{2}\right\Vert
R>1.38\cdot10^{-7}>0.
\]
This by Lemmas \ref{lem:y-zero-practical}, \ref{lem:trans} establishes the
proof of Theorem \ref{th:example}.

The computations needed for the proof of Theorem \ref{th:example} have taken under a minute on a single core 3 GHz Intel Core i7 processor.

We have considered perturbations $\varepsilon \in(0,10^{-7}]$, but the proof of intersection can easily be extended to larger $\varepsilon$.

\end{document}